  \CheckCommand*\refstepcounter[1]{\stepcounter{#1}%
      \protected@edef\@currentlabel
       {\csname p@#1\endcsname\csname the#1\endcsname}%
  }
  \renewcommand*\refstepcounter[1]{\stepcounter{#1}%
    \protected@edef\@currentlabel
      {\csname p@#1\expandafter\endcsname\csname the#1\endcsname}%
  }
  \def\labelformat#1{\expandafter\def\csname p@#1\endcsname##1}
  \DeclareRobustCommand\Ref[1]{\protected@edef\@tempa{\ref{#1}}%
     \expandafter\MakeUppercase\@tempa
  }
  \newcommand{\numberlike}[2]{%
     \expandafter\def\csname c@#1\endcsname{%
         \expandafter\csname c@#2\endcsname}%
  }
  \def\DefaultNumberTheoremWithin{section}
 \theoremstyle{plain}
  \newtheorem{lemma}{Lemma}
     \numberwithin{lemma}{\DefaultNumberTheoremWithin}
     \numberwithin{claim}{\DefaultNumberTheoremWithin}
  \newtheorem{theorem}{Theorem}
     \numberwithin{theorem}{\DefaultNumberTheoremWithin}
     \numberwithin{corollary}{\DefaultNumberTheoremWithin}
     \numberwithin{proposition}{\DefaultNumberTheoremWithin}
     \numberwithin{conjecture}{\DefaultNumberTheoremWithin}
  \theoremstyle{definition}
     \numberwithin{definition}{\DefaultNumberTheoremWithin}
  \theoremstyle{definition}
     \numberwithin{question}{\DefaultNumberTheoremWithin}
  \theoremstyle{definition}
     \numberwithin{problem}{\DefaultNumberTheoremWithin}
 \theoremstyle{definition}
 	\numberwithin{notation}{\DefaultNumberTheoremWithin}
  \theoremstyle{remark}
  \newtheorem{remark}{Remark}
     \numberwithin{remark}{\DefaultNumberTheoremWithin}
  \theoremstyle{remark}
     \numberwithin{example}{\DefaultNumberTheoremWithin}
     \numberwithin{case}{lemma}
     \numberwithin{step}{lemma}
\newcommand{\ZZ}{ \ensuremath{\mathbb{Z}}}
\newcommand{\init}{\ensuremath{\mathrm{in}}\hspace{1pt}}
\newcommand{\rev}{{\mathrm{{rev}}}}
\newcommand{\Hilb}{\mathrm{Hilb}}
\def\cocoa{{\hbox{\rm C\kern-.13em o\kern-.07em C\kern-.13em o\kern-.15em A}}}
\newcommand{\rr}{{\langle r \rangle}}
\begin{document}

\title
{Lefschetz properties and the Veronese construction}

\author{Martina Kubitzke}
\thanks{Research of the first author was supported by the Austrian Science Foundation (FWF) through grant Y463-N13.
Research of the second author was supported by KAKENHI 22740018.}
\address{
Martina Kubitzke,
Fakult\"at f\"ur Mathematik, 
Universit\"at Wien, 
Garnisongasse 3, 
1090 Wien,
Austria.
}
\email{martina.kubitke@univie.ac.at}

\author{Satoshi Murai}
\address{
Satoshi Murai,
Department of Mathematical Science,
Faculty of Science,
Yamaguchi University,
1677-1 Yoshida, Yamaguchi 753-8512, Japan.
}
\email{murai@yamaguchi-u.ac.jp}


\begin{abstract}
In this paper,
we investigate Lefschetz properties of Veronese subalgebras.
We show that, for a sufficiently large $r$,
the $r$\textsuperscript{th} Veronese subalgebra of a Cohen-Macaulay standard graded $K$-algebra
has properties similar to the weak and strong Lefschetz properties,
which we call the `almost weak' and `almost strong' Lefschetz properties.
By using this result,
we obtain new results on $h$- and $g$-polynomials of Veronese subalgebras.
\end{abstract}

\maketitle

\section{Introduction}

Let $K$ be a field of characteristic $0$.
For a standard graded (commutative) $K$-algebra $A= \bigoplus_{i \geq 0} A_i$
and for an integer $r \geq 1$,
the $K$-algebra $A^\rr:=\bigoplus_{i \geq 0} A_{ir}$ is called the {\em $r$\textsuperscript{th} Veronese subalgebra} of $A$.
Quite recently, it has been of interest to study $h$-polynomials of Veronese subalgebras \cite{BS,BW,KW}.
In this paper,
we investigate Lefschetz properties of Veronese subalgebras of Cohen-Macaulay standard graded $K$-algebras
and obtain new results on $h$- and $g$-polynomials of Veronese subalgebras.

We first recall some basics on Hilbert series and $h$-polynomials.
The {\em Hilbert series} of a standard graded $K$-algebra $A=\bigoplus_{i \geq 0} A_i$ is the formal power series
$\Hilb(A,t):= \sum_{i \geq 0} (\dim_K A_i) t^i$.
It is known that $\Hilb(A,t)$ is a rational function of the form
$\Hilb(A,t)=(h_0+h_1t+ \cdots + h_st^s)/(1-t)^d$,
where each $h_i$ is an integer and where $d=\dim A$ is the Krull dimension of $A$
(see e.\,g., \cite[\S 4.1]{BH}).
The polynomial 
\begin{equation*}
h_A(t):=h_0+h_1t+ \cdots + h_st^s
\end{equation*}
and the polynomial 
\begin{equation*}
g_A(t):=h_0+(h_1-h_0)t+ \cdots + (h_{\lfloor \frac s 2 \rfloor}-h_{\lfloor \frac s 2 \rfloor-1})t^{\lfloor \frac s 2 \rfloor}
\end{equation*}
are called the {\em $h$-polynomial} of $A$ and the {\em $g$-polynomial} of $A$, respectively. 
Here, $\lfloor x \rfloor$ denotes the integer part of $x$.

For $h$-polynomials of Veronese subalgebras,
Brenti and Welker \cite[Corollary 1.6]{BW} proved that if $h_A(t) \in \ZZ_{\geq 0}[t]$, then,
for sufficiently large $r$,
the polynomial $h_{A^\rr}(t)$ has only real zeros,
and in particular, the coefficient sequence of $h_{A^\rr}(t)$
is unimodal and log-concave.
Moreover, it was proved in \cite{KW} that if $h_A(t) \in \ZZ_{\geq 0}[t]$
and if $r \geq \max\{\dim A, \deg h_A(t)\}$,
then the $g$-polynomial of $A^\rr$ is the $f$-polynomial of a simplicial complex.
Algebraically,
the unimodality of the $h$-polynomial of a graded $K$-algebra
is closely related to Lefschetz properties of Artinian graded $K$-algebras.
One of the main purposes of this paper is to find a connection between Lefschetz properties
and the Veronese construction.

We first consider the $k$-Lefschetz property and almost strong Lefschetz property, introduced in \cite{KN}.
Let $A=\bigoplus_{i=0}^s A_i$ be a standard graded Artinian $K$-algebra, where $\dim_K A_s >0$.
For an integer $k \geq 1$,
we say that $A$ has the {\em $k$-Lefschetz property} if there is a linear form $w \in A_1$ such that
the multiplication $w^{k-2i} : A_i \to A_{k-i}: \; p\mapsto w^{k-2i}p$ is injective for $0\leq i \leq \lfloor\frac{k-1}{2}\rfloor$. The linear form $w$ is referred to as a {\em $k$-Lefschetz element} for $A$. 
If $A$ has the $(s-1)$-Lefschetz property, then we call it {\em almost strong Lefschetz}. 
An important consequence of the almost strong Lefschetz property
is that  if $A$ is almost strong Lefschetz, then the multiplication 
$w : A_i \to A_{i+1}$ is injective for $0\leq i \leq \lfloor \frac{s}{2}\rfloor -1$
and the coefficient sequence of $g_A(t)$ becomes an $M$-sequence,
namely, there is a standard graded Artinian $K$-algebra $B$ such that $g_A(t)=\Hilb(B,t)$.

We use Lefschetz properties to study $h$-polynomials of $A^\rr$ in the following way.
Let $A$ be a Cohen-Macaulay standard graded $K$-algebra of dimension $d$.
A {\em linear system of parameters} (l.s.o.p.\ for short) for $A$ is a sequence $\Theta=\theta_1,\dots,\theta_d \in A_1$
such that $\dim_K A/\Theta A < \infty$.
Note that an l.s.o.p.\ for $A$ exists if $K$ is infinite, see e.g., \cite{St-CCA}.
For an l.s.o.p.\ $\Theta=\theta_1,\dots,\theta_d$ for $A$ and for an integer $r \geq 1$,
we write
\begin{equation*}
A_\Theta^\rr:=A^\rr/(\theta_1^r A^\rr + \cdots + \theta_d^r A^\rr).
\end{equation*}
We will see in Section 2, that  
$\theta_1^r,\dots,\theta_d^r$ is an l.s.o.p.\ for $A^\rr$, and that the Hilbert series of $A_\Theta^\rr$
is equal to the $h$-polynomial of $A^\rr$. As a consequence, the $h$-polynomial of $A^\rr$ can be analyzed via Lefschetz properties for $A_{\Theta}^\rr$. 
Our first result is the following.

\begin{theorem}
\label{almostSLP}
Let $A$ be a Cohen-Macaulay standard graded $K$-algebra of dimension $d$
and let $\Theta=\theta_1,\dots,\theta_d$ be an l.s.o.p.\ for $A$.
Let $r \geq 1$ be an integer and $s=\lfloor \frac {(r-1)d} r \rfloor$.
Then $A_\Theta^\rr$ has the $s$-Lefschetz property. 
Moreover, if $r \geq \deg h_A(t)$, then $A_\Theta^\rr$ is almost strong Lefschetz.
\end{theorem}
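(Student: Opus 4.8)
The plan is to realize $A_\Theta^\rr$ as the $r$th Veronese of an Artinian quotient by powers of a regular sequence, and then to transport the strong Lefschetz property of a monomial complete intersection through a free-module decomposition. Write $C:=A/(\theta_1^r,\dots,\theta_d^r)$. Since $A$ is Cohen-Macaulay of dimension $d$, the l.s.o.p.\ $\Theta$ is a regular sequence, and hence so is $\theta_1^r,\dots,\theta_d^r$; thus $C$ is Artinian with $\Hilb(C,t)=h_A(t)(1+t+\cdots+t^{r-1})^d$. Comparing graded pieces gives $(A_\Theta^\rr)_i=A_{ir}/\sum_j \theta_j^r A_{(i-1)r}=C_{ir}$, and multiplication by a degree-$1$ element of $A_\Theta^\rr$ is multiplication by an element of $C_r$; in other words $A_\Theta^\rr=C^\rr$ as graded algebras. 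I will work throughout with $C^\rr$ and abbreviate $N:=(r-1)d$ and $s_0:=\deg h_A(t)$.

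Next I would exploit freeness. The subalgebra $R:=K[\theta_1,\dots,\theta_d]\subseteq A$ is a polynomial ring in $d$ variables, and since $A$ is a Cohen-Macaulay graded $R$-module of dimension $d=\dim R$, it is a free graded $R$-module. A homogeneous basis has degree-generating function $h_A(t)$, so there is a graded isomorphism $A\cong R\otimes_K V$ of $R$-modules with $\Hilb(V,t)=h_A(t)$, under which multiplication by any $\ell\in R_1$ becomes $(\ell\,\cdot)\otimes\mathrm{id}_V$. Reducing modulo $(\theta_1^r,\dots,\theta_d^r)$ yields a graded isomorphism of $\bar R$-modules $C\cong \bar R\otimes_K V$, where $\bar R:=K[\theta_1,\dots,\theta_d]/(\theta_1^r,\dots,\theta_d^r)$ is a monomial complete intersection with $\Hilb(\bar R,t)=(1+t+\cdots+t^{r-1})^d$ and socle degree $N$; multiplication by $\ell\in R_1$ remains block-diagonal. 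Writing $e_j$ for the degrees of the basis elements (each in $[0,s_0]$, with some $e_j=0$), the map $\ell^t:C_m\to C_{m+t}$ splits as the direct sum over $j$ of the maps $\ell^t:\bar R_{m-e_j}\to\bar R_{m+t-e_j}$.

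The key input is the theorem of Stanley and Watanabe: over a field of characteristic $0$, the monomial complete intersection $\bar R$ has the strong Lefschetz property, with $\ell:=\theta_1+\cdots+\theta_d$ a strong Lefschetz element. Hence $\ell^t:\bar R_a\to\bar R_{a+t}$ has maximal rank for all $a,t$, and since $\Hilb(\bar R,t)$ is symmetric and unimodal about $N/2$, such a map is injective whenever $2a+t\le N$ (and trivially so when $\bar R_a=0$). I set $w$ to be the image of $\ell^r$ in $(A_\Theta^\rr)_1=C_r$. Via the splitting, $w^{k-2i}:(C^\rr)_i\to(C^\rr)_{k-i}$ is the direct sum over $j$ of the maps $\ell^{(k-2i)r}:\bar R_{ir-e_j}\to\bar R_{(k-i)r-e_j}$, and for the $j$th summand $2(ir-e_j)+(k-2i)r=kr-2e_j\le kr$. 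Therefore every summand, and hence $w^{k-2i}$, is injective as soon as $kr\le N$; note that one and the same $w$ then works for all $i$ at once, because $\ell$ does.

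It remains to insert the two values of $k$. For the first assertion take $k=s=\lfloor N/r\rfloor$: then $sr\le N$, so $w^{s-2i}:(A_\Theta^\rr)_i\to(A_\Theta^\rr)_{s-i}$ is injective for every admissible $i$, giving the $s$-Lefschetz property. For the second, the decomposition $C\cong\bar R\otimes_K V$ shows that the socle degree of $A_\Theta^\rr=C^\rr$ equals $s'=\lfloor(s_0+N)/r\rfloor$; when $r\ge s_0=\deg h_A(t)$ we get $s'\le 1+N/r$, whence $(s'-1)r\le N$, and applying the previous paragraph with $k=s'-1$ yields the $(s'-1)$-Lefschetz property, i.e.\ $A_\Theta^\rr$ is almost strong Lefschetz. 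I expect the main obstacle to be the second step: establishing the free $R$-module structure and checking that reduction modulo the $\theta_j^r$ keeps multiplication by $\ell$ block-diagonal, so that the strong Lefschetz property of the monomial complete intersection, applied with the explicit element $\theta_1+\cdots+\theta_d$, can be imported summand by summand.
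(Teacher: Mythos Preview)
Your proof is correct and follows essentially the same route as the paper: both arguments use the free $K[\theta_1,\dots,\theta_d]$-module decomposition of $A$ coming from Cohen--Macaulayness, reduce modulo $(\theta_1^r,\dots,\theta_d^r)$, and then apply the Stanley--Watanabe strong Lefschetz theorem for the monomial complete intersection $\bar R$ with the explicit element $\ell=\theta_1+\cdots+\theta_d$, checking the same numerical inequality $kr\le (r-1)d$ summand by summand; your identification $A_\Theta^\rr=C^\rr$ with $C=A/(\theta_1^r,\dots,\theta_d^r)$ is just a repackaging of the paper's decomposition~\eqref{decomposition}. For the ``almost strong Lefschetz'' clause the paper argues via $\deg h_{A^\rr}(t)\le s+1$ (equation~\eqref{eq:maxDegree}) and then uses that $s$-Lefschetz implies $(s'-1)$-Lefschetz, while you compute $(s'-1)r\le N$ directly; these are equivalent.
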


In commutative algebra,
the study of the weak Lefschetz property of Artinian graded $K$-algebras has shown to be of great interest.
Recall, that a standard graded Artinian $K$-algebra $A=\bigoplus_{i =0}^s A_i$ is said to have the {\em weak Lefschetz property}
if there is a linear form $w \in A_1$ such that the multiplication map $w: A_i \to A_{i+1}$
is either injective or surjective for all $i \geq 0$.
Also, we say that $A$ is {\em almost weak Lefschetz} if there is a $1 \leq p <s$
and a linear form $w \in A_1$ such that the multiplication map $w: A_i \to A_{i+1}$
is injective for $0\leq i \leq p-1$ and is surjective for $i \geq p+1$.
Note that we do not set any condition on the multiplication  map $w: A_p\rightarrow A_{p+1}$. 
If the multiplication map $w: A_p\rightarrow A_{p+1}$ is neither injective nor surjective, then the integer $p$ will be referred to as the {\em gap} of $A$ (w.r.t.\ $w$).
We obtain the following result for the almost weak and the weak Lefschetz property.

\begin{theorem}
\label{WLP}
Let $A$ be a Cohen-Macaulay standard graded $K$-algebra of dimension $d$ 
and let $\Theta=\theta_1,\dots,\theta_d$ be an l.s.o.p.\ for $A$.
\begin{itemize}
\item[(i)] If $r\geq \deg h_A(t)$, then $A_\Theta^\rr$ is almost weak Lefschetz.
\item[(ii)] If $d$ is even and $r\geq \max\{d,2 \deg h_A(t)-d\}$, then $A_\Theta^\rr$ has the weak
Lefschetz property.
\item[(iii)] If $d$ is odd, $r\geq \frac{d}{2}$ and $\deg h_A(t)\leq \frac{d}{2}$, then $A_\Theta^\rr$ has the 
weak Lefschetz property.
\end{itemize}
\end{theorem}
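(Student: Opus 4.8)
The plan is to reduce everything to the strong Lefschetz property of a monomial complete intersection by exhibiting one explicit candidate Lefschetz element. Write $S:=A_\Theta^\rr$ and $e:=\deg h_A(t)$; by Section~2 this is a standard graded Artinian $K$-algebra with $\Hilb(S,t)=h_{A^\rr}(t)$, and I write $D$ for its socle degree, so $D=\deg h_{A^\rr}(t)=d+\lfloor(e-d)/r\rfloor$ (a standard computation of the $a$-invariant of a Veronese subalgebra, since $A^\rr$ is Cohen-Macaulay). The cases $d\le 1$ and $D\le 1$ are immediate, so I assume $d\ge 2$ and $D\ge 2$. First I would record the structure: as $A$ is Cohen-Macaulay of dimension $d$, it is a finitely generated graded free module over $R:=K[\theta_1,\dots,\theta_d]$, and comparing Hilbert series with $h_A(t)=\sum_{j=0}^e h_jt^j$ shows $A\cong\bigoplus_{j=0}^e R(-j)^{h_j}$ as graded $R$-modules. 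Setting $C:=R/(\theta_1^r,\dots,\theta_d^r)$ and tensoring over $R$ gives $\tilde A:=A/(\theta_1^rA+\cdots+\theta_d^rA)\cong\bigoplus_{j=0}^e C(-j)^{h_j}$ as graded $C$-modules, and $S=\tilde A^\rr$ as graded $K$-algebras.

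The key input is that $C$ is a monomial complete intersection, so by the theorem of Stanley (resp.\ Watanabe) — here I use $\mathrm{char}\,K=0$ — it has the strong Lefschetz property with Lefschetz element $w_0:=\theta_1+\cdots+\theta_d$; writing $N:=d(r-1)$ for its socle degree, this means $\times w_0^k\colon C_m\to C_{m+k}$ is injective when $2m+k\le N$ and surjective when $2m+k\ge N$. I would then take $w\in S_1=\tilde A_r$ to be the image of $w_0^r\in C\subseteq\tilde A$, which is nonzero since $d\ge 2$. Because $w_0^r$ lies in $C$ and the decomposition of $\tilde A$ above is $C$-linear, multiplication by $w$ on $S$ is, in Veronese degree $i\mapsto i+1$, the direct sum over $0\le j\le e$ (with multiplicity $h_j$) of the maps $\times w_0^r\colon C_{ir-j}\to C_{(i+1)r-j}$. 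Hence $\times w\colon S_i\to S_{i+1}$ is injective iff every block is, which — the binding block being $j=0$ since $h_0=1$ — holds iff $2ir+r\le N$, i.e.\ iff $i\le p_-:=\lfloor\frac{(d-1)r-d}{2r}\rfloor$; and $\times w$ is surjective iff every block is, which — the binding block now being $j=e$ since $h_e\neq 0$ — holds iff $2ir+r\ge N+2e$, i.e.\ iff $i\ge p_+:=\lceil\frac{(d-1)r-d+2e}{2r}\rceil$.

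It remains to read off the statements from these two thresholds; write $\alpha:=\frac{(d-1)r-d}{2r}$. For (i), $r\ge e$ gives $e/r\le 1$, so $p_+\le\lceil\alpha\rceil+1\le\lfloor\alpha\rfloor+2=p_-+2$; thus $\times w$ is injective for $i\le p_-$ and surjective for $i\ge p_-+2$, and a short check with $D=d+\lfloor(e-d)/r\rfloor$ gives $1\le p_-+1<D$, so $S$ is almost weak Lefschetz with $p=p_-+1$. For (ii), if $d$ is even and $r\ge d$, then $\alpha=\frac d2-\frac12-\frac d{2r}\in[\frac d2-1,\frac d2)$, so $p_-=\frac d2-1$; and $r\ge 2e-d$ is exactly the inequality $\frac{e-d/2}{r}\le\frac12$, which forces $\alpha+\frac er<\frac d2$ and hence $p_+\le\frac d2=p_-+1$. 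Then $\{0,\dots,p_-\}\cup\{p_+,p_++1,\dots\}=\ZZ_{\ge0}$, so $\times w$ has maximal rank in every degree, i.e.\ $S$ has the weak Lefschetz property. For (iii), if $d$ is odd and $r\ge\frac d2$, then $\alpha=\frac{d-1}2-\frac d{2r}\in(\frac{d-3}2,\frac{d-1}2)$, so $p_-=\frac{d-3}2$; and $e\le\frac d2$ together with $r\ge\frac d2$ forces $\alpha+\frac er=\frac{d-1}2-\frac{d-2e}{2r}\in(\frac{d-3}2,\frac{d-1}2)$, so $p_+=\frac{d-1}2=p_-+1$, and again $\times w$ has maximal rank in every degree.

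The routine parts are the two Hilbert-series computations and the floor/ceiling arithmetic; the essential external input is the strong Lefschetz property of the monomial complete intersection $C$ in characteristic $0$, with the exact injectivity/surjectivity ranges of $\times w_0^k$. The step I expect to need the most care — really the only obstacle — is keeping the reduction faithful: one must use that $\tilde A$ decomposes as a direct sum of twists of $C$ \emph{as a $C$-module}, so that multiplication by the single element $w$ is genuinely block-diagonal, and then correctly identify the binding blocks, namely $j=0$ for injectivity and $j=e=\deg h_A(t)$ for surjectivity. This is exactly what makes the window $[p_-,p_+]$ between ``injective below'' and ``surjective above'' have width controlled by $\deg h_A(t)/r$, which is the mechanism behind all three hypotheses on $r$.
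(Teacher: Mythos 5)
Your proposal is correct and follows essentially the same route as the paper: the same decomposition of $A_\Theta^\rr$ into shifted copies of the monomial complete intersection $K[\theta_1,\dots,\theta_d]/(\theta_1^r,\dots,\theta_d^r)$, the same candidate element $w=(\theta_1+\cdots+\theta_d)^r$, and the same Stanley--Watanabe input, with the only real difference being that you compress the paper's case analysis (\ref{th:WLPexact} and the surrounding discussion of gap positions) into the two closed-form thresholds $p_\pm$. The floor/ceiling arithmetic in all three parts checks out, and only the ``if'' direction of your ``iff'' claims about the binding blocks is actually needed.
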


In \ref{sect:Lefschetz}, 
for the almost weak Lefschetz property, we will show a result which is somewhat stronger, showing in particular that for $d\leq \deg h_A(t)$ a weaker assumption on $r$ is sufficient for guaranteeing the almost weak Lefschetz property. 

We say that a polynomial 
$h_0+h_1t+\cdots+h_st^s \in \ZZ_{\geq 0}[t]$ is {\em unimodal} if 
there is a $1 \leq p \leq s$ such that $h_0\leq h_1 \leq \cdots\leq h_p \geq h_{p+1} \geq \cdots \geq h_s$.
Clearly,
if a standard graded Artinian $K$-algebra $A$ is almost weak Lefschetz, then the $h$-polynomial of $A$ is unimodal.
By using \ref{almostSLP} and \ref{WLP} together with a simple Gr\"obner basis argument,
we also prove the following result on $h$-polynomials.

\begin{theorem}
\label{main}
Let $A$ be a Cohen-Macaulay standard graded $K$-algebra of dimension $d$. 
Let $r \geq 1$ be an integer and $s=\lfloor {(r-1)d \over r} \rfloor$.
\begin{itemize}
\item[(i)] If $r \geq \frac 1 2 (\deg h_A(t) +1)$, then $h_{A^\rr} (t)$ is the $f$-polynomial of a flag simplicial complex.
\item[(ii)] If $r \geq \deg h_A(t)$, then $h_{A^\rr}(t)$ is unimodal and $g_{A^\rr}(t)$ is the $f$-polynomial of a simplicial complex.
\item[(iii)] Let $h_{A^\rr}(t)= \sum_{i \geq 0} h_i^\rr t^i$.
Then $h_i^\rr \leq h_{s-i}^\rr$ for all $i\leq \frac{s}{2}$.
\end{itemize}
\end{theorem}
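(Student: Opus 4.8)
I would argue as follows. Fix an l.s.o.p.\ $\Theta = \theta_1, \dots, \theta_d$ for $A$ (which exists since $K$ is infinite) and recall from Section~2 that $\theta_1^r, \dots, \theta_d^r$ is an l.s.o.p.\ for $A^\rr$ and that $\Hilb(A_\Theta^\rr, t) = h_{A^\rr}(t)$; thus $h_{A^\rr}$ and $g_{A^\rr}$ are the $h$- and $g$-polynomials of the standard graded \emph{Artinian} algebra $A_\Theta^\rr$, and all three assertions become assertions about $A_\Theta^\rr$. Part (iii) then comes straight out of \ref{almostSLP}: that theorem produces a linear form $w$ with $w^{\,s-2i} \colon (A_\Theta^\rr)_i \to (A_\Theta^\rr)_{s-i}$ injective for $0 \le i \le \lfloor \frac{s-1}{2} \rfloor$ (in particular $s \le \deg h_{A^\rr}(t)$, since a standard graded Artinian algebra cannot have the $s$-Lefschetz property with $s$ exceeding its socle degree), and comparing dimensions gives $h_i^\rr \le h_{s-i}^\rr$ for those $i$; the only index $i \le \frac{s}{2}$ not covered is $i = \frac{s}{2}$ when $s$ is even, where equality is trivial.

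For (i) and (ii) my plan is to exhibit the polynomial in question as the Hilbert series of an Artinian algebra whose defining ideal has, in suitable coordinates, an initial ideal of the form $I_\Gamma + (z_1^2, \dots, z_m^2)$ with $I_\Gamma$ a squarefree monomial ideal: such a Hilbert series is \emph{by definition} the $f$-polynomial $\sum_i f_{i-1}(\Gamma) t^i$ of the simplicial complex $\Gamma$, and $\Gamma$ is flag exactly when $I_\Gamma$ is generated in degree $2$. The nontrivial input is a Gr\"obner-basis property of the Veronese construction: for $r \ge \frac{1}{2}(\deg h_A(t)+1)$ the algebra $A^\rr$ is \emph{$G$-quadratic}, i.e.\ has a presentation whose defining ideal admits a Gr\"obner basis of quadrics. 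Since $A$ is Cohen--Macaulay one has $\deg h_A(t) = \reg A$, so this threshold is precisely the bound coming from the work of Eisenbud, Reeves and Totaro on initial ideals of Veronese subrings (for $A$ a polynomial ring it holds for every $r \ge 1$, by Sturmfels's sorting-order Gr\"obner basis). Modding out by a generic linear form preserves $G$-quadraticity, so for a generic l.s.o.p.\ the algebra $A_\Theta^\rr$ is again $G$-quadratic, and it is Artinian because $\Hilb(A_\Theta^\rr,t)$ is a polynomial. Now an Artinian monomial ideal generated in degrees $\le 2$ necessarily contains a power of each variable, which forces every such variable to have its square among the generators; after discarding the variables lying in the initial ideal, the initial ideal of the defining ideal of $A_\Theta^\rr$ therefore has exactly the shape $I_\Gamma + (z_1^2, \dots, z_m^2)$ with $\Gamma$ flag. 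Hence $h_{A^\rr}(t) = \Hilb(A_\Theta^\rr, t)$ is the $f$-polynomial of a flag complex, which is (i). (If $\deg h_A(t) = 0$ then $A$ is a polynomial ring and (i)--(iii) are classical; this also covers the degenerate small-$s$ situations above.)

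Part (ii) combines this with the Lefschetz theorems. Unimodality of $h_{A^\rr}(t)$ for $r \ge \deg h_A(t)$ is immediate from \ref{WLP}(i): $A_\Theta^\rr$ is then almost weak Lefschetz, hence has a unimodal $h$-polynomial (as observed just before \ref{main}). For the $g$-polynomial, note that $r \ge \deg h_A(t)$ implies $r \ge \frac{1}{2}(\deg h_A(t)+1)$, so the discussion above applies and the initial ideal of $A_\Theta^\rr$ has the stated shape. By \ref{almostSLP}, $A_\Theta^\rr$ is moreover almost strong Lefschetz; choosing a generic linear form $w$ (which may simultaneously serve as an almost strong Lefschetz element and preserve $G$-quadraticity), multiplication by $w$ is injective on $(A_\Theta^\rr)_{i-1}$ for $i \le \lfloor \frac{s_0}{2} \rfloor$, where $s_0 = \deg h_{A^\rr}(t)$, so the truncation in degrees $\le \lfloor \frac{s_0}{2} \rfloor$ of $A_\Theta^\rr / w A_\Theta^\rr$ has Hilbert series $g_{A^\rr}(t)$. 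This truncated algebra is obtained from the still $G$-quadratic algebra $A_\Theta^\rr / w A_\Theta^\rr$, whose initial ideal is of the form $I_{\Gamma_0} + (z_1^2, \dots)$, by adjoining all forms of degree $> \lfloor \frac{s_0}{2} \rfloor$, which adds precisely the squarefree monomials of that degree to the initial ideal and produces the Stanley--Reisner ideal of the corresponding skeleton of $\Gamma_0$, again in the form $I_{\Gamma'} + (z_1^2, \dots)$. Thus $g_{A^\rr}(t)$ is the $f$-polynomial of the complex $\Gamma'$.

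The main obstacle is the $G$-quadraticity of $A^\rr$ with the sharp threshold $r \ge \frac{1}{2}(\deg h_A(t)+1)$: producing quadratic \emph{relations} for a Veronese subring in this range is comparatively soft, but upgrading this to a quadratic \emph{Gr\"obner basis} (rather than a non-effective ``$r \gg 0$'') is the technical heart, and is exactly where the Cohen--Macaulay hypothesis enters, through $\deg h_A(t) = \reg A$ and the resulting control on the degrees of the syzygies of $A$. A secondary point requiring care is the book-keeping of vanishing graded pieces in degenerate ranges of $r$ and $d$ --- in particular verifying $s = \lfloor \frac{(r-1)d}{r} \rfloor \le \deg h_{A^\rr}(t)$ so that (iii) is not vacuous --- together with the small cases $\lfloor \frac{s_0}{2} \rfloor \le 1$ in (ii), where $g_{A^\rr}(t)$ is linear and the assertion is trivial.
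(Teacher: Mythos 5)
Your overall architecture matches the paper's: part (iii) from \ref{almostSLP}, unimodality in (ii) from \ref{WLP}, and the $f$-polynomial statements from exhibiting a presentation of the Artinian reduction $A_\Theta^\rr$ whose initial ideal is quadratic, combined with the observation that Artinianness forces every $z_m^2$ into that initial ideal. Parts (iii) and the unimodality claim are fine. But for (i) and the $g$-polynomial half of (ii) you have left the central technical step unproven, and your proposed substitute for it does not work as stated. You reduce everything to two claims: that $A^\rr$ is $G$-quadratic for $r \ge \frac{1}{2}(\deg h_A(t)+1)$, and that ``modding out by a generic linear form preserves $G$-quadraticity.'' The first you explicitly defer to Eisenbud--Reeves--Totaro; but their effective quadratic Gr\"obner basis theorem (the paper's \ref{ert}) is stated for \emph{stable monomial ideals}, so one must first replace $A$ by $S/I$ with $I$ stable and $S/I$ Cohen--Macaulay with the same Hilbert series (\ref{3-3}(i) --- legitimate because only $\Hilb(A,t)$ enters the statement) and then bound the generating degree of $I$ by $\deg h_A(t)+1$ (\ref{3-3}(ii)); none of this reduction appears in your argument. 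More seriously, the second claim is not the step that is actually needed: the elements to be adjoined are $\theta_1^r,\dots,\theta_d^r$, which after the reduction (with $\Theta=x_n,\dots,x_{n-d+1}$) correspond to \emph{specific variables} $z_{x_i^r}$ of $T_\rr$, not generic linear forms, and $\phi_r^{-1}\big((x_i^r)S^\rr\big)$ is not generated by $z_{x_i^r}$ alone. Verifying that passing from $\phi_r^{-1}(I^\rr)$ to $\phi_r^{-1}(J^\rr)$ with $J=I+(x_n^r,\dots,x_{n-d+1}^r)$ adds only variables and quadrics to the initial ideal is precisely the content of the paper's \ref{3-2}, which has no counterpart in your proposal. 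For part (ii) your genericity requirement is also in tension with the Lefschetz requirement: the linear form $w$ must be an almost strong Lefschetz element attached to the \emph{same} l.s.o.p.\ used in the Gr\"obner computation, and the paper takes the explicit $w=(\theta_1+\cdots+\theta_d)^r$ rather than a generic form.

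A smaller remark: for the $g$-polynomial in (ii) you do not need $A_\Theta^\rr/wA_\Theta^\rr$ to remain $G$-quadratic. The conclusion there is only that $g_{A^\rr}(t)$ is the $f$-polynomial of a simplicial complex, not a flag one, and for that it suffices that $\init_{\succ_\rev}\phi_r^{-1}(J^\rr+(w)^\rr)$ contain every $z_m^2$ --- which it inherits from the smaller ideal $\phi_r^{-1}(J^\rr)$; the paper then simply takes the standard monomials of degree at most $\lfloor \deg h_{A^\rr}(t)/2\rfloor$, exactly as in your skeleton construction.
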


\section{Lefschetz Properties}\label{sect:Lefschetz}

In this section,
we study Lefschetz properties of $A_\Theta^\rr$. In particular, we will provide the proofs of \ref{almostSLP} and \ref{WLP}. 

We start to fix some notation, which we will use throughout this section. In the following, we consider a Cohen-Macaulay standard graded $K$-algebra $A$ of dimension $d$
together with an l.s.o.p.\ $\Theta=\theta_1,\dots,\theta_d$ for $A$.
To prove \ref{almostSLP} and \ref{WLP},
we use the following observation, which relates the Hilbert series of $A_{\Theta}^\rr$ to the $h$-polynomial of the $r\textsuperscript{th}$ Veronese subalgebra of $A$:
By Cohen-Macaulayness of $A$, $\Theta$ is not only an l.s.o.p.\ but also a regular sequence for $A$. Hence, $A$ is a finitely generated and free $K[\theta_1,\ldots,\theta_d]$-module. In particular, there exist homogeneous elements $u_1,\ldots,u_m$ of $A$ such that 
we have the decomposition
\begin{equation}\label{eq:dec}
A=\bigoplus_{j=1}^m u_j \cdot K[\theta_1,\ldots,\theta_d]
\end{equation}
\noindent as $K[\theta_1,\ldots,\theta_d]$-modules.
Note that $u_1,\ldots, u_m$ is a $K$-basis of $A/\Theta A$ (see e.g., \cite[Chapter 1]{St-CCA}). 
Moreover, since the Hilbert series of $A/\Theta A$ is equal to the $h$-polynomial of $A$ (cf., \cite[Remark 4.1.11]{BH}),
we have
\begin{equation*}
\deg u_j \leq \deg h_A(t)
\end{equation*}
for all $1\leq j\leq m$. 
Let $r \geq 1$ be an integer. We will show that $\theta_1^{r},\ldots,\theta_d^r$ is an l.s.o.p. for $A^\rr$. 
From \eqref{eq:dec} we infer that the $r\textsuperscript{th}$ Veronese subalgebra $A^\rr$ decomposes as
\begin{equation*}
A^\rr=\bigoplus_{j=1}^m u_j \cdot \left(\bigoplus_{i \geq 0} K[\theta_1,\dots,\theta_d]_{ir - \deg u_j} \right),
\end{equation*}
where we set $K[\theta_1,\ldots,\theta_d]_k:=\{0\}$ if $k<0$.  
And for the quotient $A_{\Theta}^\rr=A^\rr/(\theta_1^r A^\rr + \cdots + \theta_d^r A^{\langle r\rangle})$ we obtain
\begin{equation}
\label{decomposition}
A_\Theta^\rr =\bigoplus_{j=1}^m u_j \cdot \left(\bigoplus_{i \geq 0} \big( K[\theta_1,\dots,\theta_d]/(\theta_1^r,\dots,\theta_d^r)\big)_{ir - \deg u_j} \right)
\end{equation}
as $K[\theta_1,\dots,\theta_d]^\rr$-modules.
Being the grading of $A^\rr_\Theta$ induced by the usual $\mathbb{Z}$-grading of $K[\theta_1,\ldots,\theta_d]$, we know that the homogeneous component of $A^\rr_\Theta$ of degree $i$ is given by
\begin{equation} \label{eq:degComp}
\bigoplus_{j=1}^m u_j \cdot \big( K[\theta_1,\dots,\theta_d]/(\theta_1^r,\dots,\theta_d^r)\big)_{ir - \deg u_j}.
\end{equation}
Since the right-hand side of \eqref{decomposition}
has finite length, we conclude that $\theta_1^r,\dots,\theta_d^r$ is an l.s.o.p.\ for $A^\rr$. This together with the fact that  the Cohen-Macaulay property is preserved under taking Veronese subalgebras (cf.\ \cite[Chapter 3]{GW}) implies that the Hilbert series of $A_\Theta^\rr$ equals the $h$-polynomial of $A^\rr$.
In particular, since $\max\{\deg u_j~|~1\leq j\leq m\}=\deg h_A(t)$
and since the maximum degree in $K[\theta_1,\ldots,\theta_d]/(\theta_1^r,\ldots,\theta_d^r)$ is $(r-1)d$,
it follows directly from \eqref{eq:degComp} that 
\begin{equation}\label{eq:maxDegree}
\deg h_{A^\rr}(t)=
\left\lfloor \frac{d(r-1)+\deg h_A(t)}{r}\right\rfloor.
\end{equation}
Note that the above equation \eqref{eq:maxDegree} holds for any standard graded $K$-algebra $A$ whose $h$-polynomial has non-negative coefficients
(e.g., use \cite[Theorem 1.2]{BW}).

For the proof of \ref{almostSLP} and \ref{WLP}, we need the following fact proved by Stanley \cite{St} and Watanabe \cite{Wa}.

\begin{lemma}
\label{SLPforCI}
Let $K$ be a field of characteristic $0$ and let $r \geq 1$ be an integer.
For integers $0 \leq i <j$,
the multiplication map
\begin{align*}
\times (x_1+\cdots+x_d)^{j-i}:
(K[x_1,\dots,x_d]/(x_1^r,\dots,x_d^r))_i 
\quad&\rightarrow\quad 
(K[x_1,\dots,x_d]/(x_1^r,\dots,x_d^r))_j\\
p\quad&\mapsto \quad(x_1+\cdots+x_d)^{j-i}\cdot p
\end{align*}
is injective if $i+j \leq (r-1)d$ and is surjective if $i+j \geq (r-1)d$.
\end{lemma}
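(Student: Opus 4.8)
The plan is to establish the stronger statement that the monomial complete intersection $R:=K[x_1,\dots,x_d]/(x_1^r,\dots,x_d^r)$ enjoys the strong Lefschetz property with $\ell:=x_1+\cdots+x_d$ as a Lefschetz element; the two assertions of the lemma then amount to saying that the multiplication map $\times\ell^{j-i}$ is injective below the ``middle degree'' $i+j=(r-1)d$ and surjective above it. I would carry this out following Watanabe's representation-theoretic argument. First I equip the one-variable truncation $B:=K[x]/(x^r)$ with a graded $\mathfrak{sl}_2$-module structure: let $e$ be multiplication by $x$, let $h$ act on the one-dimensional piece $B_t=Kx^t$ (for $0\le t\le r-1$) by the scalar $2t-(r-1)$, and let $f$ be the degree $-1$ operator with $f(x^t)=t(r-t)x^{t-1}$. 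A direct calculation gives $[h,e]=2e$, $[h,f]=-2f$ and $[e,f]=h$, so $(e,h,f)$ is an $\mathfrak{sl}_2$-triple; since the weights $-(r-1),-(r-1)+2,\dots,r-1$ all occur with multiplicity one, $B$ is the irreducible module of highest weight $r-1$.

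Next I pass to the $d$-fold tensor product. Identifying $R$ with $B^{\otimes d}$ as graded $K$-algebras via $x_i\mapsto 1\otimes\cdots\otimes x\otimes\cdots\otimes 1$, I give $B^{\otimes d}$ the diagonal $\mathfrak{sl}_2$-action, where $e,h,f$ act by the Leibniz rule. The point, immediate from the Leibniz rule, is that the diagonal action of $e$ on $B^{\otimes d}$ is, under this identification, exactly multiplication by $x_1+\cdots+x_d=\ell$ on $R$; similarly $h$ acts on $R_i$ by the scalar $2i-d(r-1)$, so $R_i$ is precisely the weight space of $R$ of weight $2i-d(r-1)$.

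I then invoke complete reducibility of finite-dimensional $\mathfrak{sl}_2$-modules in characteristic $0$: $R$ is a direct sum of irreducibles, each of which has one-dimensional weight spaces, with $e$ mapping the weight-$\mu$ space isomorphically onto the weight-$(\mu+2)$ space except when $\mu$ is the top weight. Propagating this through $e^{k}$ shows that for any finite-dimensional $\mathfrak{sl}_2$-module $V$ the map $e^{k}\colon V_\mu\to V_{\mu+2k}$ is injective as soon as $\mu\le -k$ and surjective as soon as $\mu\ge -k$. Applying this with $V=R$, $\mu=2i-d(r-1)$ and $k=j-i$, the condition $\mu\le-(j-i)$ is equivalent to $i+j\le(r-1)d$, and $\mu\ge-(j-i)$ is equivalent to $i+j\ge(r-1)d$, which is exactly the claim.

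The step needing the most care is the bookkeeping of weights and signs: checking that the diagonal $e$-action is multiplication by $x_1+\cdots+x_d$ with no spurious rescaling, and that ``injective below the middle, surjective above it'' translates into the single inequality $i+j\le(r-1)d$ (resp.\ $\ge$), not into separate conditions on $i$ and $j$. A second route, essentially Stanley's, avoids $\mathfrak{sl}_2$ entirely: the claim only concerns ranks of matrices with integer entries, hence is insensitive to the characteristic-$0$ field, so one may take $K=\CC$ and identify $R$ with the cohomology ring $H^{*}\big((\mathbb{P}^{r-1})^{d};\CC\big)$ with cohomological degrees halved, under which $\ell$ corresponds to the class of an ample divisor; the Hard Lefschetz theorem then gives that $\times\ell^{(r-1)d-2i}\colon H^{2i}\to H^{2((r-1)d-i)}$ is an isomorphism, and factoring this composition through $H^{2j}$ yields injectivity for $i+j\le(r-1)d$ and surjectivity for $i+j\ge(r-1)d$.
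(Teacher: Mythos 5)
Your argument is correct and is precisely the proof the paper has in mind: the paper does not prove \ref{SLPforCI} but cites it to Stanley and Watanabe, and your main line of reasoning is exactly Watanabe's $\mathfrak{sl}_2$-argument (with Stanley's Hard Lefschetz argument correctly sketched as the alternative). All the computations check out: the triple $(e,h,f)$ on $K[x]/(x^r)$ satisfies the $\mathfrak{sl}_2$ relations (including at the truncation degree, where $f(x^r)=r\cdot 0\cdot x^{r-1}=0$ is consistent with $x^r=0$), the diagonal $e$ on $B^{\otimes d}$ is literal multiplication by $x_1+\cdots+x_d$ with no rescaling, $R_i$ is the weight space of weight $2i-d(r-1)$, and the reduction to irreducibles via complete reducibility in characteristic $0$ gives injectivity of $e^{k}$ on $V_\mu$ for $\mu\le -k$ and surjectivity onto $V_{\mu+2k}$ for $\mu\ge -k$, which translates exactly into $i+j\le (r-1)d$ and $i+j\ge(r-1)d$. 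The only microscopic imprecision is in the Hard Lefschetz coda: for the surjectivity half one should factor the isomorphism $\ell^{\,j-i'}\colon R_{i'}\to R_{j}$ with $i'=(r-1)d-j\le i$ through $R_i$, rather than factor through $R_j$; this does not affect the (already complete) representation-theoretic proof.
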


We have now laid the necessary foundations for giving the proof of \ref{almostSLP}. \\

\noindent{\sf Proof of \ref{almostSLP}:}
Let $w=(\theta_1+\cdots+\theta_d)^r$.
We prove that $w$ is an $s$-Lefschetz element of $A_\Theta^\rr$,
namely, we will show that the multiplication
\begin{equation*}
\times w^{s-2i} : (A^{\langle r \rangle}_\Theta)_i \rightarrow (A^{\langle r \rangle}_\Theta)_{s-i}
\end{equation*}
is injective for $0\leq i\leq \lfloor \frac {s-1} 2 \rfloor.$
By the decomposition \eqref{decomposition},
it is enough to prove that, for $1\leq j\leq m$,
the multiplication
{\small
\begin{equation*}
\times w^{s-2i} :
\big( K[\theta_1,\dots,\theta_d]/(\theta_1^r,\dots,\theta_d^r)\big)_{ir - \deg u_j}
\rightarrow
\big( K[\theta_1,\dots,\theta_d]/(\theta_1^r,\dots,\theta_d^r)\big)_{(s-i)r - \deg u_j}
\end{equation*}
}is injective for the same $i$.
The desired injectivity follows from \ref{SLPforCI}
since $ir-\deg u_j + (s-i)r-\deg u_j \leq sr \leq (r-1)d$.

Finally,
if $r \geq \deg h_A(t)$, then $\deg \Hilb(A_\Theta^\rr,t)=\deg h_{A^\rr}(t) \leq s+1$ by \eqref{eq:maxDegree},
which implies that $A_\Theta^\rr$ is almost strong Lefschetz.
\qed
\smallskip

We now proceed to the proof of \ref{WLP}.
Part (i), i.\,e., the statement concerning the almost weak Lefschetz property, 
follows from the following stronger result.

\begin{theorem}\label{th:WLPexact}
Let $A$ be a Cohen-Macaulay standard graded $K$-algebra of dimension $d$ 
and let $\Theta=\theta_1,\dots,\theta_d$ be an l.s.o.p.\ for $A$. 
Then $A_\Theta^\rr$ is almost weak Lefschetz if
\begin{itemize}
\item[(a)] $d$ is even and one of the
following conditions holds:
\begin{itemize}
\item[(i)] $d \leq \frac{1}{2}\deg h_A(t)$ and $r \geq \frac{2 \deg h_A(t)-d}{3}$,
\item[(ii)] $\frac{1}{2} \deg h_A(t) \leq d \leq \deg h_A(t)$ and $r \geq d$,
\item[(iii)] $\deg h_A(t) \leq d \leq \frac{3}{2} \deg h_A(t)$ and $r \geq 2 \deg h_A(t) - d$,
\item[(iv)] $\frac{3}{2} \deg h_A(t) \leq d \leq 3 \deg h_A(t)$ and $r \geq \frac{d}{3}$,
\item[(v)] $d\geq 3 \deg h_A(t)$ and $r \geq \deg h_A(t)$, or, 
\end{itemize}
\item[(b)] $d$ is odd and one of the following conditions holds:
\begin{itemize}
\item[(i)] $d \leq \deg h_A(t)$ and $r \geq \deg h_A(t) -\frac{d}{2}$,
\item[(ii)] $\deg h_A(t) \leq d \leq 2 \deg h_A(t)$ and $r \geq \frac{d}{2}$,
\item[(iii)] $d\geq 2 \deg h_A(t)$ and $r \geq \deg h_A(t)$.
\end{itemize}
\end{itemize}
\end{theorem}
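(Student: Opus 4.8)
The plan is to use, exactly as in the proof of \ref{almostSLP}, the linear form $w = (\theta_1 + \cdots + \theta_d)^r \in (A_\Theta^\rr)_1$, and to read off the behaviour of the maps $\times w \colon (A_\Theta^\rr)_i \to (A_\Theta^\rr)_{i+1}$ from the module decomposition \eqref{eq:degComp}. Write $e = \deg h_A(t)$ and $B = K[\theta_1,\dots,\theta_d]/(\theta_1^r,\dots,\theta_d^r)$. Then $\times w$ at level $i$ is the direct sum over $j = 1, \dots, m$ of the maps $\times (\theta_1 + \cdots + \theta_d)^r \colon B_{ir - \deg u_j} \to B_{(i+1)r - \deg u_j}$, and, since $0 \le \deg u_j \le e$, \ref{SLPforCI} shows that the $j$\textsuperscript{th} summand is injective once $2(ir - \deg u_j) + r \le (r-1)d$ and surjective once $2(ir - \deg u_j) + r \ge (r-1)d$ (the cases of vanishing source or target being immediate).

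From this I would extract two coarse ranges. Using $\deg u_j \ge 0$ one has $2(ir - \deg u_j) + r \le (2i+1)r$, so $\times w$ is injective at every level $i$ with $(2i+1)r \le (r-1)d$, that is, for all $i \le I := \lfloor \frac{(r-1)d - r}{2r} \rfloor$. Dually, using $\deg u_j \le e$ one has $2(ir - \deg u_j) + r \ge (2i+1)r - 2e$, so $\times w$ is surjective at every level $i$ with $(2i+1)r \ge (r-1)d + 2e$, that is, for all $i \ge S := \lceil \frac{(r-1)d - r + 2e}{2r} \rceil$; it is also surjective at $i = \deg h_{A^\rr}(t)$, where the target is $0$. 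Consequently $w$ witnesses the almost weak Lefschetz property of $A_\Theta^\rr$ as soon as these two ranges almost overlap, i.e.\ whenever one can choose an integer $p$ with $1 \le p < \deg h_{A^\rr}(t)$, $p - 1 \le I$ and $p + 1 \ge \min\bigl(S,\deg h_{A^\rr}(t)\bigr)$; a brief discussion shows that, apart from the degenerate situation $\deg h_{A^\rr}(t) \le 1$ (where the statement is vacuous or trivial), such a $p$ exists precisely when $S - I \le 2$. Moreover, for $A = K[x_1,\dots,x_d,y]/(y^{e+1})$ these ranges cannot be enlarged for this $w$, so the conditions to be verified are exactly those forcing $S - I \le 2$.

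Hence the remaining work is a purely numerical verification, and this is where the case distinctions enter. Setting $\alpha = \frac{(r-1)d - r}{2r} = \frac{d-1}{2} - \frac{d}{2r}$, one has $S - I = \lceil \alpha + \tfrac{e}{r} \rceil + \lceil -\alpha \rceil$, whence $S - I \le 2$ if and only if $\tfrac{e}{r} \le 2 - \{\alpha\}$, where $\{\alpha\}$ is the fractional part of $\alpha$. The value of $\{\alpha\}$ depends on the parity of $d$ (whether $\tfrac{d-1}{2}$ is an integer or a half-integer) and on which of the intervals $\big[ \tfrac{d}{2k-1}, \tfrac{d}{2k-3} \big)$ (for $d$ even) or $\big[ \tfrac{d}{2k}, \tfrac{d}{2k-2} \big)$ (for $d$ odd) contains $r$. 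For $d$ even this gives $\{\alpha\} = \tfrac{1}{2} - \tfrac{d}{2r}$ when $r \ge d$, so the condition reads $r \ge \tfrac{2e-d}{3}$ (which, according to whether $d \le \tfrac{e}{2}$ or $\tfrac{e}{2} \le d \le e$, becomes (a)(i) or (a)(ii)); it gives $\{\alpha\} = \tfrac{3}{2} - \tfrac{d}{2r}$ when $\tfrac{d}{3} \le r < d$, so the condition reads $r \ge 2e - d$ (giving (a)(iii) or (a)(iv) according to whether $d \le \tfrac{3}{2} e$); and (a)(v), where $d \ge 3e$ and $r \ge e$, follows at once from $\tfrac{e}{r} \le 1$ together with the elementary inequality $\lceil a \rceil + \lceil b \rceil \le \lceil a + b \rceil + 1$. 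For $d$ odd, $\tfrac{d-1}{2}$ is an integer, so the relevant thresholds shift by $\tfrac12$: one gets $\{\alpha\} = 1 - \tfrac{d}{2r}$ for $r \ge \tfrac{d}{2}$, yielding the condition $r \ge e - \tfrac{d}{2}$ and hence (b)(i)--(ii), while (b)(iii) is again immediate from $\tfrac{e}{r} \le 1$.

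The conceptual heart of the argument is the first two paragraphs: the decomposition \eqref{eq:degComp} combined with the Stanley--Watanabe property \ref{SLPforCI}. The main obstacle is the bookkeeping of the last paragraph --- pinning down the exact value of $\{\alpha\}$ in each parity and each range of $r$, matching it against the hypotheses (a)(i)--(v) and (b)(i)--(iii), and checking in the boundary sub-cases that a legitimate gap $p$ with $1 \le p < \deg h_{A^\rr}(t)$ really exists.
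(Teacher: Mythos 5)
Your proposal is correct and follows essentially the same route as the paper: the same candidate element $w=(\theta_1+\cdots+\theta_d)^r$, the same reduction via the decomposition \eqref{decomposition} to the maps on $K[\theta_1,\dots,\theta_d]/(\theta_1^r,\dots,\theta_d^r)$, and the same appeal to \ref{SLPforCI} with the worst cases $\deg u_j=0$ for injectivity and $\deg u_j=\deg h_A(t)$ for surjectivity. The only difference is organizational --- you compress the paper's case-by-case inequality checks into the single criterion $S-I\le 2$ and a fractional-part computation, which reproduces the same bounds (and the same gap positions) in every case.
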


\begin{proof}
Before providing the proofs for each set of conditions separately, we start with a general discussion that can be used in all cases. 
Let $w:=(\theta_1+ \cdots+\theta_d)^r$. Our aim is to show that in all parts of the theorem, $w$ can be used as an almost weak Lefschetz element for $A_\Theta^\rr$. 
Using the same notations as at the beginning of this section, we know from \eqref{decomposition} that as $K[\theta_1,\ldots,\theta_d]^\rr$-modules, we have the decomposition 
\begin{equation*}
A^{\langle r \rangle}_\Theta=\bigoplus_{j=1}^m u_j \left(\bigoplus_{i \geq 0} \big( K[\theta_1,\dots,\theta_d]/(\theta_1^r,\dots,\theta_d^r)\big)_{ir - \deg u_j} \right).
\end{equation*}
Thus, in order to show that the multiplication
\begin{equation*}
\times w : (A^{\langle r \rangle}_\Theta)_i \rightarrow (A^{\langle r \rangle}_\Theta)_{i+1}
\end{equation*}
is injective and surjective for a certain $i\geq 0$, 
it suffices to show that
for all $1\leq j\leq m$ the multiplication
{\small \begin{equation}\label{eq:mult}
\times w :
\big( K[\theta_1,\dots,\theta_d]/(\theta_1^r,\dots,\theta_d^r)\big)_{ir - \deg u_j}
\rightarrow
\big( K[\theta_1,\dots,\theta_d]/(\theta_1^r,\dots,\theta_d^r)\big)_{(i+1)r - \deg u_j}
\end{equation}}
is injective and surjective, respectively for the same $i$.

We first consider case (a) (i). 
Suppose that $d$ is even,
$d \leq \frac{1}{2}\deg h_A(t)$ and $r \geq \frac{2 \deg h_A(t)-d}{3}$.
Combining the latter two conditions in particular yields $r\geq d$. 
Our aim is to use \ref{SLPforCI}. 
We first show that the multiplication in \eqref{eq:mult} is injective for $0\leq i\leq \frac{d}{2}-1$ and for all $1\leq j\leq m$. 
For all $1\leq j\leq m$ it holds that
\begin{equation*}
2ir+r-2\deg u_j \leq dr-r \leq (r-1)d + d-r \leq (r-1)d,
\end{equation*}
where the first and the last inequality follow from $\deg u_j\geq 0$ for $1\leq j\leq m$ and $r \geq d$, respectively. 
Hence, \ref{SLPforCI} implies the desired injectivity.

Next, we show that the multiplication in \eqref{eq:mult} is surjective for $i\geq \frac{d}{2}+1$. As in the previous case, for $1\leq j\leq m$ we compute
\begin{equation*}
2ir + r -2 \deg u_j \geq dr + 3r -2 \deg h_A(t) \geq (r-1)d,
\end{equation*}
where for the first inequality we use that $\deg u_j \leq \deg h_A(t)$ for $1\leq j\leq m$, and the last inequality holds since $r \geq  \frac{2 \deg h_A(t)-d}{3}$. 
Surjectivity now follows from \ref{SLPforCI}.

The cases (a) (ii) -- (iv) and (b) (i) -- (ii) follow from almost literally the same arguments, taking into account the different ranges and bounds for $d$ and $r$, respectively, as well as the different location of the gap. 
Indeed, if there is a gap, then it is at position $\frac{d}{2}$ in the cases (a) (i) -- (ii), 
and at position $\frac{d}{2}-1$ in the cases (iii) -- (iv).
In the situation of (b) (i) -- (ii), 
the gap --~if existing~-- lies at position $\frac{d-1}{2}$.

The cases (a) (v) and (b) (iii) have to be treated slightly differently. Let $s=\lfloor \frac{(r-1)d}{r}\rfloor$. 
By an analog reasoning as for the other cases one infers that the multiplication in \eqref{eq:mult} is surjective for $ i \geq \frac s 2 +1$.
On the other hand,
\ref{almostSLP} says that $A_\Theta^\rr$ is $s$-Lefschetz.
In particular the multiplication map in \eqref{eq:mult} is injective for $i \leq \frac{s-1}{2}$. Hence, we conclude that $A_\Theta^\rr$ is almost weak Lefschetz with a possible gap at position $\lfloor \frac{s+1}{2} \rfloor$. 
\end{proof}

We want to remark that the arguments in the above proof do only depend on the effective size of $r$ and not on the precise relation between $d$ and $\deg h_A(t)$. Moreover, the proofs of (a) (iv) and (b) (iii) 
do not use the fact that $d\leq 3\deg h_A(t)$ and $d\leq 2\deg h_A(t)$, respectively. We only include these restrictions since for $d> \deg h_A(t)$ part (a) (v) and part (b) (iii) provide better, i.\,e., smaller  bounds for $r$. In particular, this allows us to 
conclude, that if $d$ is even, the gap --~if existing~-- is at position $\frac{d}{2}$ if $r\geq \max\{d,\frac{2\deg h_A(t)-d}{3}\} $ and at position $\frac{d}{2}+1$ if $r\geq \max\{\frac{d}{3},2\deg h_A(t)-d\}$. If $d$ is odd and 
$r\geq \max\{\frac{d}{2},\deg h_A(t)-\frac{d}{2}\}$, the gap is at position $\frac{d-1}{2}$. 
This will be relevant for the proof of \ref{WLP} (ii) and (iii). \\
\smallskip

\noindent{\sf Proof of \ref{WLP}:} 
Part (i) can readily be deduced from \ref{th:WLPexact}.
To show part (ii), note that --~independent of $d$~-- it follows from \ref{th:WLPexact} (a) (i) --(iv) and the discussion preceding this proof that $A_\Theta^\rr$ is almost weak Lefschetz. Since there can exist 
at most one gap, we infer from the mentioned discussion that $A_\Theta^\rr$ is indeed weak Lefschetz.

For part (iii), let $r\geq \max\{\frac{d}{2},\deg h_A(t)-\frac{d}{2}\}$. By \ref{th:WLPexact} and the discussion preceding this proof, we know that $A_\Theta^\rr$ is almost weak Lefschetz with 
a possible gap being at position $\frac{d-1}{2}$. 
Assume, in addition, that $\deg h_A(t)\leq \frac{d}{2}$. We claim that the multiplication map
\begin{equation*}
 \times w: (A_\Theta^\rr)_{\frac{d-1}{2}}\rightarrow (A_\Theta^\rr)_{\frac{d-1}{2}+1}
\end{equation*}
is surjective.
The desired surjectivity follows from 
\ref{SLPforCI} since for all $1\leq j\leq m$ it holds that 
\begin{equation*}
r\left(\frac{d-1}{2}+\frac{d-1}{2}+1\right)-2\deg u_j \geq 
rd-2\deg h_A(t) \geq (r-1)d.
\end{equation*}
We conclude that $A_\Theta^\rr$ has the weak Lefschetz property.
\qed

\begin{remark}
\ref{WLP}(ii) says that, for any even dimensional Cohen-Macaulay graded $K$-algebra $A$,
the algebra $A_\Theta^\rr$ has the weak Lefschetz property for $r \gg 0$.
Unfortunately, this fact does not hold for odd dimensional Cohen-Macaulay graded $K$-algebras.
Let $A=K[x_1,\dots,x_8]/((x_1^2,x_1x_2,x_1x_3,x_1x_4,x_1x_5)+(x_2,x_3,x_4,x_5)^3)$.
Then $A$ is a Cohen-Macaulay graded $K$-algebra of dimension $3$ with the $h$-polynomial $h_A(t)=1+5t+10t^2$
and $\Theta=x_6,x_7,x_8$ is an l.s.o.p.\  for $A$,
but $A_\Theta^\rr$ does not have the weak Lefschetz property for any $ r \geq 3$.

If $r \geq 3$, then the $h$-polynomial $h_{A^\rr}(t)=h^\rr_0+h^\rr_1 t+h^\rr_2 t^2$
of $A^\rr$ satisfies $h_0^\rr<h_1^\rr<h_2^\rr$.
However,
there are no linear forms $w$ such that
$\times w: (A_\Theta^\rr)_1 \to (A_\Theta^\rr)_2$ is injective.
Consider the $K$-vector spaces 
$V=x_1 (K[x_6,x_7,x_8]/(x_6^r,x_7^r,x_8^r))_{r-1} \subset (A_\Theta^\rr)_1$
and
$W=x_1 (K[x_6,x_7,x_8]/(x_6^r,x_7^r,x_8^r))_{2r-1} \subset (A_\Theta^\rr)_2$.
Then,  for any linear form $w\in A_\Theta^\rr$ we have $w V \subset W$, since $x_1 x_i=0$ in $A$ for $i=1,2,\dots,5$, but 
\begin{eqnarray*}
\dim_K V &=&\dim_K (K[x_6,x_7,x_8]/(x_6^r,x_7^r,x_8^r))_{r-1}\\
&>&\dim_K(K[x_6,x_7,x_8]/(x_6^r,x_7^r,x_8^r))_{2r-1} = \dim_K W,
\end{eqnarray*}
where the inequality follows since
$\dim_K (K[x_6,x_7,x_8]/(x_6^r,x_7^r,x_8^r))_{r-1}= { r+1 \choose r-1}$
and
$\dim_K (K[x_6,x_7,x_8]/(x_6^r,x_7^r,x_8^r))_{2r-1}=
\dim_K (K[x_6,x_7,x_8]/(x_6^r,x_7^r,x_8^r))_{r-2}={ r \choose r-2}$.
This fact implies that the multiplication $\times w: V \to W$ is not injective.
\end{remark}

\section{Consequences on $h$-vectors}

In this section,
we prove \ref{main}. 
Throughout this section, we let $S=K[x_1,\dots,x_n]$ be a standard graded polynomial ring over a field $K$.
For an integer $r \geq 1$,
let
$$T_{\rr} =K[z_m: m \mbox{ is a monomial in $S$ of degree $r$]},$$
where each $z_m$ is a variable.
Then there is a natural ring homomorphism
\begin{align*}
\begin{array}{lclc}
\phi_r: &\ T_\rr &\longrightarrow& S^\rr\\
& z_m &\mapsto& m.
\end{array}
\end{align*}
For a homogeneous ideal $I \subset S$,
let $I^\rr:=\bigoplus_{j \geq 0} I_{jr}$.
Then $I^\rr$ is a graded ideal of $S^\rr$ and $(S/I)^\rr = S^\rr / I^\rr$.
Also, the ring $(S/I)^\rr$ is isomorphic to $T_\rr/ \phi_r^{-1}(I^\rr)$.

To prove the main result,
we need a few known results on Gr\"obner bases of $\phi^{-1}(I^\rr)$ proved by Eisenbud, Reeves and Totaro \cite{ERT}.
We refer the readers to \cite{CLO} for the basics on Gr\"obner basis theory.

Let $>_\rev$ be the reverse lexicographic order on $S$ induced by $x_1>\cdots >x_n$,
and let $\succ_\rev$ be the reverse lexicographic order on $T_\rr$ such that the ordering of the variables is defined by
$z_m \succ_\rev z_{m'}$ if $m >_\rev m'$.
For a monomial $m \in S$,
we write $\max(m)$ (resp.\ $\min(m)$)
for the largest (resp.\ smallest) integer $i$ such that $x_i$ divides $m$.
We say that a monomial
$$u=z_{m_1} z_{m_2} \cdots z_{m_k} \in T_\rr,$$
where $m_1 >_\rev \cdots >_\rev m_k$,
is {\em standard} if $\max(m_i) \leq \min (m_{i+1})$ for $1\leq i\leq k-1$.
The following fact was shown in the proof of \cite[Proposition 6]{ERT}.

\begin{lemma}
\label{3-1}
A monomial $u \in T_\rr$ is standard if and only if $u \not \in \init_{\succ_\rev} (\ker \phi_r)$.
\end{lemma}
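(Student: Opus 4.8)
The plan is to prove Lemma~\ref{3-1} by closely following the original argument of Eisenbud, Reeves and Totaro, which rests on two observations: first, that the standard monomials of $T_\rr$ are linearly independent modulo $\ker\phi_r$, and second, that they span $T_\rr/\ker\phi_r$. Since a monomial $u$ lies in $\init_{\succ_\rev}(\ker\phi_r)$ if and only if it is \emph{not} a standard monomial in the Gr\"obner-basis sense (i.e.\ not in the $K$-basis of $T_\rr/\init_{\succ_\rev}(\ker\phi_r)$ given by monomials outside the initial ideal), it suffices to show that the combinatorially standard monomials defined above coincide with this basis. Because $T_\rr/\ker\phi_r \cong S^\rr$ and the Hilbert function is preserved under passing to initial ideals, the two spanning/independence claims together force the two sets of monomials to agree degree by degree.

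First I would show that distinct combinatorially standard monomials $u = z_{m_1}\cdots z_{m_k}$ have distinct images $\phi_r(u) = m_1\cdots m_k \in S$, and moreover that every monomial of $S$ of degree $rk$ that is a product of $k$ monomials of degree $r$ can be written in this standard form: given any such monomial $N \in S_{rk}$, one recovers the factorization by repeatedly peeling off the product of the $r$ largest variables (in the order $x_1 > \cdots > x_n$) occurring in $N$. This greedy decomposition is unique and automatically satisfies $\max(m_i) \le \min(m_{i+1})$, so the map sending a standard monomial to its image is a bijection onto those monomials of $S$ divisible by a degree-$r$ monomial in the appropriate sense — in particular onto a $K$-basis of $S^\rr$ in each degree. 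Hence the standard monomials descend to a $K$-spanning set of $T_\rr/\ker\phi_r$, and by the bijection they are linearly independent there as well; so they form a $K$-basis.

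Next I would invoke the general Gr\"obner-basis fact that the monomials \emph{not} lying in $\init_{\succ_\rev}(\ker\phi_r)$ also form a $K$-basis of $T_\rr/\ker\phi_r$. Comparing Hilbert functions, the number of standard monomials of any fixed degree equals the number of monomials outside the initial ideal of that degree. It then remains to check one containment: every standard monomial lies outside $\init_{\succ_\rev}(\ker\phi_r)$. This is where the specific choice of the reverse lexicographic order $\succ_\rev$ matters — one shows that if $u$ is standard then $u$ is the $\succ_\rev$-largest monomial mapping to $\phi_r(u)$, so $u$ cannot be the leading term of any element of $\ker\phi_r$ (such a leading term is strictly larger than all other monomials with the same image). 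Given this one-sided containment between two sets of equal finite cardinality in each degree, equality follows, which is exactly the statement of the lemma.

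The main obstacle I anticipate is the order-theoretic step: verifying that a standard monomial $u$ is genuinely the $\succ_\rev$-maximal preimage of $\phi_r(u)$ under $\phi_r$, equivalently that any rewriting of $\phi_r(u)$ as a different product of $k$ degree-$r$ monomials yields a $\succ_\rev$-smaller monomial of $T_\rr$. This requires care in tracking how the reverse lexicographic order on $S$ (via $\max$ and $\min$ of the factors) translates into the induced order on $T_\rr$, and it is the crux of why the greedy "take the largest variables first" decomposition is forced. Once that is in place, the Hilbert-function comparison makes the rest of the argument essentially formal, so I would present that order computation carefully and treat the basis/counting bookkeeping more briefly.
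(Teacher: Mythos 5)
Your overall architecture---biject the standard monomials of $T_\rr$ with the monomials of $S^\rr$ via the greedy sorted factorization, match Hilbert functions against the monomials outside $\init_{\succ_\rev}(\ker \phi_r)$, and then verify a single containment---is sound, and it is essentially the Eisenbud--Reeves--Totaro argument that the paper simply cites for this lemma. But the one step you yourself flag as the crux is stated backwards, and as written the argument fails. You claim that a standard monomial $u$ is the $\succ_\rev$-\emph{largest} element of its fiber $\phi_r^{-1}(\phi_r(u))$ and conclude that it therefore cannot be a leading term. Both halves are wrong: since $\init_{\succ_\rev}$ selects the $\succ_\rev$-largest term, if $u \succ_\rev v$ and $\phi_r(u)=\phi_r(v)$ then $u=\init_{\succ_\rev}(u-v)\in \init_{\succ_\rev}(\ker\phi_r)$. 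So being the largest element of a non-singleton fiber puts $u$ \emph{into} the initial ideal, the opposite of what you need. The correct characterization (for the toric ideal $\ker\phi_r$, which has a Gr\"obner basis of binomials) is that the monomials outside $\init_{\succ_\rev}(\ker\phi_r)$ are exactly those that are $\succ_\rev$-\emph{minimal} in their fibers.

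The standard monomials really are the minimal, not the maximal, fiber elements. Take $n=2$, $r=2$, so $T_\rr=K[z_{x_1^2},z_{x_1x_2},z_{x_2^2}]$ with $z_{x_1^2}\succ_\rev z_{x_1x_2}\succ_\rev z_{x_2^2}$. The fiber of $x_1^2x_2^2$ consists of the standard monomial $z_{x_1^2}z_{x_2^2}$ and the non-standard $z_{x_1x_2}^2$; comparing exponent vectors $(1,0,1)$ and $(0,2,0)$, the reverse lexicographic order gives $z_{x_1x_2}^2\succ_\rev z_{x_1^2}z_{x_2^2}$, so the standard monomial is the \emph{smaller} one and is the trailing term of the binomial $z_{x_1x_2}^2-z_{x_1^2}z_{x_2^2}\in\ker\phi_2$. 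The repair is local: show that the sortedness condition $\max(m_i)\le\min(m_{i+1})$ forces $u$ to be $\succ_\rev$-minimal in its fiber (this is where the specific interaction between revlex on $S$ and revlex on $T_\rr$ enters); note that any monomial of $\init_{\succ_\rev}(\ker\phi_r)$ is divisible by the leading term of a binomial in the kernel and hence is non-minimal in its own fiber, so fiber-minimal monomials lie outside the initial ideal; and then your Hilbert-function count closes the equivalence exactly as you planned. Your greedy factorization and the counting bookkeeping are fine as stated.
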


The above lemma implies the next result.

\begin{lemma}
\label{3-2}
Let $ r \geq 1$ and $ 1 \leq \ell \leq n$ be integers.
Let $I \subset S$ be a monomial ideal
and $J=I+(x_n^r,x_{n-1}^r,\dots,x_\ell^r)$.
Then
$$
\init_{\succ_\rev} \phi_r^{-1} (J^\rr)
= \init_{\succ_\rev} \phi_r^{-1}(I^\rr) +(z_{x_n^r},\dots,z_{x_\ell^r})
+(z_mz_m': mm' \in (x_n^r,\dots,x_\ell^r)).$$
\end{lemma}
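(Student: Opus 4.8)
The plan is to analyze the initial ideal $\init_{\succ_\rev}\phi_r^{-1}(J^\rr)$ by combining \ref{3-1} with the combinatorial description of standard monomials, reducing everything to a statement about which monomials in $T_\rr$ lie in the initial ideal. First I would recall that $J^\rr$ is generated (as an ideal of $S^\rr$) by the degree-$jr$ pieces of the generators of $J$, so that $\phi_r^{-1}(J^\rr)$ is the preimage of this ideal; since $J = I + (x_n^r,\dots,x_\ell^r)$ and $I$ is a monomial ideal, a monomial $u = z_{m_1}\cdots z_{m_k} \in T_\rr$ lies in $\phi_r^{-1}(J^\rr)$ precisely when the monomial $\phi_r(u) = m_1\cdots m_k \in S$ lies in $J$, i.e.\ is divisible either by a minimal generator of $I$ or by some $x_i^r$ with $\ell \le i \le n$. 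I would set up the standard monomial basis: by \ref{3-1}, the monomials of $T_\rr$ not in $\init_{\succ_\rev}(\ker\phi_r)$ are exactly the standard monomials, and distinct standard monomials map under $\phi_r$ to distinct monomials of $S$, so $\phi_r$ restricted to standard monomials is a bijection onto the monomials of $S$ of degree divisible by $r$. Consequently, for any monomial ideal $L \subset S^\rr$, a standard monomial $u$ lies in $\init_{\succ_\rev}\phi_r^{-1}(L)$ if and only if $\phi_r(u) \in L$, and the initial ideal $\init_{\succ_\rev}\phi_r^{-1}(L)$ is spanned by $\init_{\succ_\rev}(\ker\phi_r)$ together with those standard monomials $u$ with $\phi_r(u)\in L$.

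With this dictionary in hand, the containment ``$\supseteq$'' is the easy direction: each $z_{x_i^r}$ maps to $x_i^r \in J$, so $z_{x_i^r} \in \phi_r^{-1}(J^\rr)$ and, being standard (a single-variable monomial), it lies in $\init_{\succ_\rev}\phi_r^{-1}(J^\rr)$; similarly $z_m z_{m'}$ with $mm' \in (x_n^r,\dots,x_\ell^r)$ lies in $\phi_r^{-1}(J^\rr)$ hence its leading term is in the initial ideal; and $\init_{\succ_\rev}\phi_r^{-1}(I^\rr) \subseteq \init_{\succ_\rev}\phi_r^{-1}(J^\rr)$ because $I^\rr \subseteq J^\rr$. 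For ``$\subseteq$'', I would take a standard monomial $u = z_{m_1}\cdots z_{m_k} \in \init_{\succ_\rev}\phi_r^{-1}(J^\rr)$, so $\phi_r(u) = m_1\cdots m_k \in J$. If $\phi_r(u) \in I$, then $u \in \phi_r^{-1}(I^\rr)$ and its leading term (which is $u$ itself up to the standard-monomial normal form) lies in $\init_{\succ_\rev}\phi_r^{-1}(I^\rr)$. Otherwise $\phi_r(u)$ is divisible by some $x_i^r$ with $\ell \le i \le n$; here I would use the standardness condition $\max(m_1)\le\min(m_2)\le\cdots$ to argue that the power of $x_i$ in $m_1\cdots m_k$ is concentrated in at most two consecutive factors $m_t, m_{t+1}$ (since $x_i \mid m_t$ forces $\min(m_t)\le i\le\max(m_t)$, and the nesting of supports means only the factors whose support interval contains $i$ can contribute, and those form a consecutive block that can only be ``thick'' at one transition). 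If a single $m_t$ already contains $x_i^r$, then since $m_t$ has degree $r$ we get $m_t = x_i^r$, so $z_{x_i^r}$ divides $u$; if instead $x_i^r \mid m_t m_{t+1}$ with neither alone divisible, then $z_{m_t}z_{m_{t+1}}$ divides $u$ and $m_t m_{t+1} \in (x_n^r,\dots,x_\ell^r)$. In either case $u$ lies in one of the listed monomial ideals.

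The main obstacle is the combinatorial step in the ``$\subseteq$'' direction: showing that for a \emph{standard} monomial $u$, divisibility of $\phi_r(u)$ by $x_i^r$ forces $u$ to be divisible by $z_{x_i^r}$ or by a product $z_m z_{m'}$ with $mm'$ divisible by $x_i^r$. This is exactly where the hypothesis $J = I + (x_n^r,\dots,x_\ell^r)$ with a \emph{descending block} of pure powers matters: the nesting $\max(m_s)\le\min(m_{s+1})$ means that the factors $m_s$ divisible by $x_i$ occupy a consecutive range of indices $s$, and within that range at most one $m_s$ can have $x_i$ appearing to a power exceeding what is forced at the endpoints — I would make this precise by noting that if $x_i \mid m_s$ and $x_i \mid m_{s'}$ for $s < s'$ then $\min(m_{s'}) \ge \max(m_s) \ge i \ge \min(m_s)$ forces $\min(m_s) = \max(m_s) = i$ for all the ``interior'' indices, i.e.\ $m_s = x_i^r$ there, which immediately gives $z_{x_i^r}\mid u$; the remaining case is that $x_i$ divides at most two of the $m_s$, handled as above. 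Everything else is bookkeeping with the normal-form/standard-monomial basis and the observation that $\succ_\rev$-leading terms of elements of $\phi_r^{-1}(J^\rr)$ are computed against the Gröbner basis of $\ker\phi_r$, so they reduce to standard monomials.
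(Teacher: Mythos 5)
Your proposal is correct and follows essentially the same route as the paper: reduce to standard monomials via \ref{3-1}, use the normal-form decomposition and the fact that distinct standard monomials map to distinct monomials of the monomial ideal $J$ to conclude $\phi_r(u)\in J$, and then analyze divisibility by $x_i^r$ using the condition $\max(m_j)\le\min(m_{j+1})$. Your combinatorial case analysis in the last step is in fact more detailed than the paper's (which simply asserts that some $m_jm_{j+1}$ is divisible by $x_i^r$), but it is the same argument.
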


\begin{proof}
It is clear that the left-hand side contains the right-hand side.
We show that also the reverse inclusion holds. 
Let
$$u=z_{m_1} z_{m_2} \cdots z_{m_k} \in \init_{\succ_\rev} \phi_r^{-1}(J^\rr),$$
be a monomial with $m_1 >_\rev \cdots >_\rev m_k$.
We show that if $u \not \in \init_{\succ_\rev} \phi_r^{-1}(I^\rr)$, 
then $u  \in (z_{x_n^r},\dots,z_{x_\ell^r})+(z_mz_{m'}:mm' \in (x_n^r,\dots,x_\ell^r))$.

Since $u \not \in \init_{\succ_\rev} \phi_r^{-1}(I^\rr)$,
we have $u \not \in \init_{\succ_\rev} \ker (\phi_r)$.
Thus $u$ is standard by \ref{3-1}.
We claim $\phi_r(u) \in J^\rr$.
Let $f=u+v_1+\cdots+v_m+g \in \phi_r^{-1}(J^\rr)$
be such that $\init_{\succ_\rev}(f)=u$, $g \in \ker (\phi_r)$ and $u,v_1,\dots,v_m$ are distinct standard monomials.
Then 
$\phi_r(f)=\phi_r(u)+\phi_r(v_1)+\cdots+\phi_r(v_m) \in J^\rr.$
Since $J$ is a monomial ideal and $\phi_r(u),\phi_r(v_1),\dots,\phi_r(v_m)$ are distinct monomials,
we have $\phi_r(u) \in J^\rr$.

Since, by assumption, $u \not \in \phi_r^{-1}(I^\rr)$,  
we have
$$\phi_r(u)=m_1m_2\cdots m_k \in (x_n^r,\dots,x_\ell^r).$$
Thus, there is an $ \ell \leq i \leq n$ such that $x_i^r$ divides $\phi_r(u)$.
If $\deg u =1$, then $u$ must be equal to $z_{x_i^r}$.
If $\deg u >1$,
then, by the definition of a standard monomial,
there is a $1 \leq j \leq k-1$ such that $x_i^r$ divides $m_jm_{j+1}$.
This proves the desired statement.
\end{proof}

A monomial ideal $I \subset S$ is called {\em stable} if,
for any monomial $m \in I$,
one has $m ( x_i / x_{\max(m)}) \in I$ for any $i < \max(m)$.
The following facts are known.

\begin{lemma}\
\label{3-3}
\begin{itemize}
\item[(i)] For any Cohen-Macaulay standard graded $K$-algebra $A$ with $\dim_K A_1 \leq n$,
there is a stable monomial ideal $J \subset S$ such that $S/J$ is Cohen-Macaulay and $S/J$ has the same Hilbert series as $A$.
\item[(ii)] Let $I \subset S$ be a stable monomial ideal such that $S/I$ is a Cohen-Macaulay graded $K$-algebra of dimension $d$.
Then $x_n,x_{n-1},\dots,x_{n-d+1}$ is a linear system of parameters for $S/I$
and $I$ is generated by monomials of degree $\leq \deg h_{S/I}(t) +1$.
\end{itemize}
\end{lemma}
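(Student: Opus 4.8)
\emph{Proof idea.}
The two statements are essentially independent. For (i), the plan is to realize the Hilbert series of $A$ as that of a polynomial extension of an Artinian quotient of $K[x_1,\dots,x_{n-d}]$ by a lexsegment ideal. Since $A$ is Cohen--Macaulay and $K$ is infinite, choose an l.s.o.p.\ $\Theta=\theta_1,\dots,\theta_d$ for $A$ with the $\theta_i$ linearly independent in $A_1$, and set $B:=A/\Theta A$. Then $B$ is Artinian, $\Hilb(B,t)=h_A(t)$, and $\dim_K B_1=\dim_K A_1-d\le n-d$. By Macaulay's theorem there is a lexsegment ideal $L\subset R:=K[x_1,\dots,x_{n-d}]$ with $\Hilb(R/L,t)=h_A(t)$; being lexsegment, $L$ is strongly stable, in particular stable. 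Put $J:=LS$. A short case check --~according to whether $x_{\max(m)}$ lies among $x_1,\dots,x_{n-d}$ or not~-- shows that the extension to $S$ of a stable ideal of $R$ is again stable, so $J$ is stable; moreover $S/J\cong (R/L)[x_{n-d+1},\dots,x_n]$ is a polynomial ring over the Artinian ring $R/L$, hence Cohen--Macaulay of dimension $d$, and $\Hilb(S/J,t)=\Hilb(R/L,t)/(1-t)^d=h_A(t)/(1-t)^d=\Hilb(A,t)$. (One may alternatively write $A=S/I$ and take $J=\gin_{>_\rev}(I)$: in characteristic $0$ this is strongly stable, it has the same Hilbert series as $A$, and $\depth S/J=\depth S/I$ because the order is the reverse lexicographic one, by Bayer--Stillman.)

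For (ii) the crucial point is to locate the variables that can occur in a minimal generator of a stable Cohen--Macaulay ideal. Let $G(I)$ be the minimal monomial generating set of $I$ and put $k:=\max\{\max(u):u\in G(I)\}$. By the Eliahou--Kervaire resolution $\pdim S/I=k$, hence $\depth S/I=n-k$ by Auslander--Buchsbaum, and Cohen--Macaulayness forces $k=n-d$. (If one prefers not to invoke the Eliahou--Kervaire resolution: picking $u\in G(I)$ with $\max(u)=k$, stability shows that the image of $u/x_k$ in $S/(I+(x_{k+1},\dots,x_n))=K[x_1,\dots,x_k]/(I\cap K[x_1,\dots,x_k])$ is a nonzero socle element, so that quotient has depth $0$, which forces $\depth S/I\le n-k$; the reverse inequality is immediate from the decomposition below.) Consequently every minimal generator of $I$ lies in $R:=K[x_1,\dots,x_{n-d}]$, so $I=I_0S$ with $I_0:=I\cap R$, and $S/I\cong (R/I_0)[x_{n-d+1},\dots,x_n]$ is free over $K[x_{n-d+1},\dots,x_n]$. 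Therefore $x_n,x_{n-1},\dots,x_{n-d+1}$ is a regular sequence on $S/I$, and the quotient of $S/I$ by it, namely $R/I_0$, has Krull dimension $\dim S/I-d=0$; that is, $x_n,\dots,x_{n-d+1}$ is an l.s.o.p.\ for $S/I$.

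It remains to bound the degrees of the minimal generators of $I$, equivalently of $I_0$. Since $x_n,\dots,x_{n-d+1}$ is an l.s.o.p.\ for the Cohen--Macaulay algebra $S/I$, we have $h_{S/I}(t)=\Hilb(R/I_0,t)$. Writing $D:=\deg h_{S/I}(t)$, it follows that $(R/I_0)_e=0$ for every $e>D$, i.e.\ $I_0$ contains every monomial of $R$ of degree at least $D+1$. But every monomial of $R$ of degree at least $D+2$ is properly divisible by some monomial of degree $D+1$, hence is not a minimal generator of $I_0$; therefore $I_0$, and with it $I=I_0S$, is generated in degrees $\le D+1=\deg h_{S/I}(t)+1$.

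The routine ingredients are the existence of l.s.o.p.s and Macaulay's theorem in (i), together with the whole last paragraph. The one genuinely structural input, and the step I expect to be the main obstacle in a self-contained argument, is the description of the minimal generators of a stable Cohen--Macaulay ideal --~equivalently the Eliahou--Kervaire formula $\pdim S/I=\max\{\max(u):u\in G(I)\}$~-- since it is exactly this that upgrades $x_n,\dots,x_{n-d+1}$ from a mere regular sequence to an honest system of parameters and pins down the Artinian reduction as $K[x_1,\dots,x_{n-d}]/I_0$.
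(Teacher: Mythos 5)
Your proof is correct. For part (ii) it is essentially the paper's argument: identify the variables appearing in minimal generators of a stable ideal (the paper quotes Eliahou--Kervaire for the fact that $I$ is generated by monomials in $K[x_1,\dots,x_{n-d}]$; your $\pdim S/I=\max\{\max(u):u\in G(I)\}$ plus Auslander--Buchsbaum is the same input, and your alternative socle-element argument is a nice self-contained substitute), then deduce the regular sequence/l.s.o.p.\ claim from the decomposition $S/I\cong (R/I_0)[x_{n-d+1},\dots,x_n]$ and the degree bound from the vanishing of $(R/I_0)_e$ for $e>\deg h_{S/I}(t)$. For part (i) you take a genuinely different route: the paper simply cites an existence result ([IP, Theorem 2]) as a black box, whereas you give an explicit construction --~Artinian reduction by an l.s.o.p., Macaulay's theorem to produce a lex ideal $L$ in $K[x_1,\dots,x_{n-d}]$ with Hilbert series $h_A(t)$, and extension $J=LS$, which is visibly stable with $S/J$ a polynomial ring over $R/L$, hence Cohen--Macaulay with the right Hilbert series. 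Your construction buys transparency and avoids the external citation (it even lands in the situation described in (ii), with generators confined to the first $n-d$ variables); the paper's citation (or your parenthetical $\gin_{>_\rev}$ alternative via Galligo and Bayer--Stillman) is shorter but less explicit. One small point worth a sentence in a write-up: when $\dim_K A_1-d<n-d$, the lex ideal in $R$ realizing $h_A(t)$ necessarily contains some variables; this is harmless since it is still a lex (hence strongly stable) ideal, but it deserves mention.
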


\begin{proof}
We only sketch the proof since the statements are well-known in commutative algebra.
For any standard graded $K$-algebra $A$ with $\dim_K A_1 \leq n$,
there is a homogeneous ideal $ I \subset S$ such that $S/I$ is isomorphic to $A$ as graded $K$-algebra.
Then (i) follows from \cite[Theorem 2]{IP}.

Suppose that $I$ is a stable monomial ideal such that $S/I$ is Cohen-Macaulay.
A result of Eliahou and Kervaire \cite{EK} shows that $I$ is generated by monomials in $K[x_1,\dots,x_{n-d}]$.
This shows that $x_n,x_{n-1},\dots,x_{n-d+1}$ is a regular sequence of $S/I$
and, therefore, is an l.s.o.p.\ for $S/I$.
Also,
since the $h$-polynomial of $S/I$ is equal to
the Hilbert series of $S/(I+(x_n,x_{n-1},\dots,x_{n-d+1}))$,
$I$ contains all monomials in $K[x_1,\dots,x_{n-d}]$ of degree $\deg h_{S/I}(t)+1$.
Since $I$ is generated by monomials in $K[x_1,\dots,x_{n-d}]$,
$I$ is generated by monomials of degree $\leq \deg h_{S/I}(t) +1$.
\end{proof}

For the proof of \ref{main} we will use the following result
for Veronese algebras of the quotient of a stable monomial ideal, which was proven by 
Eisenbud, Reeves and Totaro \cite[Theorem 8]{ERT}.

\begin{lemma}[Eisenbud-Reeves-Totaro]
\label{ert}
Let $I \subset S$ be a stable monomial ideal generated by monomials of degree $\leq s$.
If $r \geq {s \over 2}$, then $\init_{\succ_\rev} \phi^{-1}_r (I^{\langle r \rangle})$ is generated by monomials of degree $\leq 2$.
\end{lemma}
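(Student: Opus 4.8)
The plan is to determine the monomial ideal $\init_{\succ_\rev}\phi_r^{-1}(I^\rr)$ well enough to read off its generators. Since $\ker\phi_r\subseteq\phi_r^{-1}(I^\rr)$, this ideal contains $\init_{\succ_\rev}(\ker\phi_r)$, whose monomials are, by \ref{3-1}, exactly the non-standard ones; and every non-standard monomial $z_{m_1}\cdots z_{m_k}$ (with $m_1>_\rev\cdots>_\rev m_k$) is by definition divisible by a degree-$2$ monomial $z_{m_j}z_{m_{j+1}}$ with $\max(m_j)>\min(m_{j+1})$, which is again non-standard. Hence $\init_{\succ_\rev}(\ker\phi_r)$ is generated in degree $2$, and it will suffice to show that every \emph{standard} monomial lying in $\init_{\succ_\rev}\phi_r^{-1}(I^\rr)$ is divisible by a monomial of degree $\le 2$ that also lies in this ideal.

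First I would identify which standard monomials lie in $\init_{\succ_\rev}\phi_r^{-1}(I^\rr)$, by the same argument as in the proof of \ref{3-2}: a standard monomial $u=z_{m_1}\cdots z_{m_k}$ lies in it if and only if $\phi_r(u)=m_1\cdots m_k\in I$. For "if", note $\phi_r(u)\in I^\rr$ and $u$ is a monomial. For "only if", by \ref{3-1} together with Macaulay's basis theorem the standard monomials form a $K$-basis of $T_\rr/\ker\phi_r\cong S^\rr$, so $\phi_r$ is injective on standard monomials; given $f\in\phi_r^{-1}(I^\rr)$ with $\init_{\succ_\rev}(f)=u$ and reducing $f$ modulo a Gr\"obner basis of $\ker\phi_r$ — which does not touch the non-reducible leading term $u$ — one obtains $f'\equiv f\pmod{\ker\phi_r}$ with $f'=u+v_1+\cdots+v_\ell$ for distinct standard monomials $v_1,\dots,v_\ell$; then $\phi_r(f')=\phi_r(u)+\sum_i\phi_r(v_i)\in I^\rr\subseteq I$ is a sum of distinct monomials, so $\phi_r(u)\in I$ because $I$ is a monomial ideal.

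The crux — and the only place the hypothesis $r\ge s/2$ is used — is the combinatorial claim: if $M:=m_1\cdots m_k\in I$ with $k\ge2$ and $m_1>_\rev\cdots>_\rev m_k$ standard, then $m_1m_2\in I$. Write the variables of $M$ in weakly increasing order of their indices as $M=x_{i_1}x_{i_2}\cdots x_{i_{kr}}$ with $i_1\le\cdots\le i_{kr}$; the standardness conditions $\max(m_j)\le\min(m_{j+1})$ force $m_1m_2$ to equal the product $x_{i_1}\cdots x_{i_{2r}}$ of the $2r$ smallest variables of $M$. Let $t$ be the least integer for which the prefix $g:=x_{i_1}\cdots x_{i_t}$ lies in $I$; this exists because $M\in I$. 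A short case analysis from the definition of stability shows $g$ is a minimal generator of $I$: if a proper divisor $q$ of $g$ were in $I$, then — distinguishing whether $x_{i_t}$ divides $q$, and if so whether $g/q$ is a power of $x_{i_t}$, and in the remaining case applying stability to $q$ to exchange one $x_{i_t}$ for a smaller variable dividing $g/q$ — one exhibits in each case a strictly shorter prefix of $M$ lying in $I$, contradicting the minimality of $t$. Therefore $\deg g=t\le s$, and since $r\ge s/2$ we get $t\le 2r$, so $g\mid x_{i_1}\cdots x_{i_{2r}}=m_1m_2$ and hence $m_1m_2\in I$.

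Putting things together: a standard $u=z_{m_1}\cdots z_{m_k}$ in $\init_{\succ_\rev}\phi_r^{-1}(I^\rr)$ satisfies $m_1\cdots m_k\in I$ by the second paragraph; if $k=1$ then $u=z_{m_1}$ with $m_1\in I$ is a degree-$1$ element of the ideal, while if $k\ge2$ then $m_1m_2\in I$ by the combinatorial claim and $z_{m_1}z_{m_2}$ is standard (sub-products of a standard monomial are standard, since $\min$ and $\max$ are weakly increasing along a standard chain) with $\phi_r(z_{m_1}z_{m_2})=m_1m_2\in I$, hence $z_{m_1}z_{m_2}$ is a degree-$2$ element of $\init_{\succ_\rev}\phi_r^{-1}(I^\rr)$ dividing $u$. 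Together with the first paragraph this shows the ideal is generated in degree $\le 2$. I expect the main obstacle to be the combinatorial claim — precisely, recognizing that standardness turns $m_1m_2$ into the degree-$2r$ "initial segment" of $M$, so that a minimal generator of $I$, necessarily of degree $\le s\le 2r$, gets confined to it; the minimality-of-prefix argument is the self-contained way to pin this down (it is the combinatorial core of the Eliahou--Kervaire theory of stable ideals), and the bound $r\ge s/2$ is exactly what makes two Veronese blocks long enough to absorb such a generator.
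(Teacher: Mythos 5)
The paper does not actually prove this lemma: it is quoted verbatim as \cite[Theorem 8]{ERT} and used as a black box, so there is no in-paper argument to compare yours against. Your blind proof is, as far as I can check, correct and self-contained, and it essentially reconstructs the Eisenbud--Reeves--Totaro argument. The three ingredients all hold up: (1) the non-standard monomials form exactly $\init_{\succ_\rev}(\ker\phi_r)$ by \ref{3-1}, and each is divisible by a non-standard quadratic $z_{m_j}z_{m_{j+1}}$, so that part of the ideal is quadratically generated; (2) your characterization of the standard monomials lying in $\init_{\succ_\rev}\phi_r^{-1}(I^\rr)$ as those with $\phi_r(u)\in I$ is the same reduction-plus-monomial-ideal trick the paper uses in its proof of \ref{3-2}, and it is sound (the leading term $u$, being standard, is never touched or cancelled during reduction, and distinct standard monomials map to distinct monomials of $S$); (3) the combinatorial crux is right: standardness forces $m_1m_2$ to be the multiset of the $2r$ smallest variables of $M$, and the shortest initial segment $g=x_{i_1}\cdots x_{i_t}$ of $M$ lying in $I$ is a minimal generator, hence of degree $\le s\le 2r$, hence divides $m_1m_2$. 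The only spot a referee would press on is your case analysis for the minimality of $g$; it does go through --- if the multiplicity of $x_{i_t}$ in a proper divisor $q\in I$ of $g$ is below its multiplicity in $g$, then $q$ already divides the length-$(t-1)$ prefix, and otherwise $\max(q)=x_{i_t}$ and one stable exchange $q\mapsto qx_b/x_{i_t}$ with $x_b\mid g/q$, $b<i_t$, produces a divisor of that prefix --- but an alternative, slightly cleaner route is the standard fact that for a stable ideal, any monomial $h\in I$ of degree $e$ dividing $g$ can be exchanged down to the product of the $e$ smallest variables of $g$, which immediately gives $t\le e\le s$. Either way the hypothesis $r\ge s/2$ enters exactly where you say it does.
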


Now we are in the position to prove \ref{main}.
Recall that a {\em simplicial complex} $\Delta$ on $[n]:=\{1,2,\dots,n\}$
is a collection of subsets of $[n]$ satisfying that if $F \in \Delta$ and $G \subset F$, then $G \in \Delta$.
A simplicial complex is said to be {\em flag} if every minimal non-face of $\Delta$ has cardinality $\leq 2$.
For a simplicial complex $\Delta$,
we write $f_i(\Delta)$ for the number of elements $F \in \Delta$ with $|F|=i+1$.
The {\em $f$-polynomial} of $\Delta$ is the polynomial
$f(\Delta,t)= \sum_{i\geq 0} f_{i-1}(\Delta) t^i$,
where $f_{-1}(\Delta):=1$.
The $f$-polynomial of $\Delta$ can be expressed in an algebraic way.
Indeed,
the $f$-polynomial of a simplicial complex $\Delta$ on $[n]$
is equal to the Hilbert series of $S/((x_F: F \not \in \Delta)+(x_1^2,\dots,x_n^2))$,
where $x_F:=\prod_{i \in F} x_i$.
Moreover, $\Delta$ is flag if and only if the ideal $(x_F: F \not \in \Delta)+(x_1^2,\dots,x_n^2)$ is generated by monomials of degree $\leq 2$.
\bigskip

\noindent{\sf Proof of \ref{main}:} 
Part (iii) immediately follows from \ref{almostSLP}.
The unimodality of (ii) is a direct consequence of \ref{WLP}.
We prove (i) and the remaining part of (ii).

Fix $r \geq 1$.
Since the Hilbert series of $A^\rr$ only depends on $r$
and the Hilbert series of $A$,
by \ref{3-3} (i),
we may assume that $A=S/I$, where $I$ is a stable monomial ideal.
Let $\Theta=x_n,x_{n-1},\dots,x_{n-d+1}$
and $J=I+(x_n^r,\dots,x_{n-d+1}^r)$.
Then, by \ref{3-3} (ii), $\Theta$ is an l.s.o.p.\ for $A=S/I$
and
$$A_\Theta^\rr= S^\rr/J^\rr \cong T_\rr / \phi_r^{-1}(J^\rr).$$

We now prove (i).
Suppose $r \geq \frac 1 2 (\deg h_A(t) +1)$.
Let $\Delta$ be the set of monomials in $T_\rr$, 
which are not contained in $\init_{\succ_\rev} (\phi_r^{-1}(J^\rr))$.
By \ref{3-3} (ii),
$I$ is generated by monomials of degree $\leq \deg h_A(t)+1$.
Then \ref{3-2} and \ref{ert} say that
$\init_{\succ_\rev} (\phi_r^{-1}(J^\rr))$ is generated by monomials of degree $\leq 2$.
This fact shows that
$\init_{\succ_\rev} (\phi_r^{-1}(J^\rr))$ contains $z_m^2$ for any variable $z_m$ of $T_\rr$
since 
$T_\rr/\phi_r^{-1}(J^\rr)$ is Artinian. 
This implies that $\Delta$ is a set of squarefree monomials.
Thus, we may regard $\Delta$ as a simplicial complex.
Moreover,
since 
$$\init_{\succ_\rev} (\phi_r^{-1}(J^\rr))=(u: u \mbox{ is a monomial in $T_\rr$ with $u\not \in \Delta$})$$
 is generated by monomials of degree $\leq 2$,
$\Delta$ is a flag simplicial complex.
Also, by the construction of $\Delta$,
we have
$$
f(\Delta,t)= \Hilb(T_\rr/\phi_r^{-1}(J^\rr),t)
=\Hilb(A_\Theta^\rr,t)=h_{A^\rr}(t),$$
which proves (i).

Finally, we prove the second part of (ii).
Suppose $r \geq \deg h_A(t)$.
Let $\lambda=\deg h_{A^\rr}(t)$.
By \ref{WLP} and the proof of \ref{th:WLPexact},
there is a linear form $w \in (S^\rr)_1 =S_r$ such that
\begin{align}
\label{3.a}
g_{A^\rr}(t)=
\sum_{i=0}^{\lfloor \frac \lambda 2\rfloor}
\big(\dim_K (A_\Theta^\rr/w A_\Theta^\rr)_i\big) t^i.
\end{align}
Observe
\begin{align}
\label{3.b}
A_\Theta^\rr/w A_\Theta^\rr \cong T_\rr/ \phi^{-1}_r (J^\rr+(w)^\rr).
\end{align}
Let
$\Gamma$ be the set of monomials of degree $\leq \lfloor \frac \lambda 2 \rfloor$
which are not in $\init_{\succ_\rev}\phi^{-1}_r (J^\rr+(w)^\rr)$.
As we have already seen in the proof of (i),
$\init_{\succ_\rev}\phi^{-1}_r (J^\rr)$ contains $z_m^2$
for any variable $z_m$ of $T_\rr$.
Thus $\Gamma$ can be regarded as a simplicial complex.
Then, \eqref{3.a} and \eqref{3.b} say that
$g_{A^\rr}(t)$ is equal to the $f$-polynomial of $\Gamma$, as desired.
\qed


  \bibliography{biblio}
  \bibliographystyle{plain}

\end{document}